\newtheorem{thm}{Theorem}[section]
\newtheorem{cor}[thm]{Corollary}
\newtheorem{lem}[thm]{Lemma}
\newtheorem{prop}[thm]{Proposition}
\newtheorem{cnj}[thm]{Conjecture}
\newtheorem{dfn}[thm]{Definition}
\newtheorem{rem}[thm]{Remark}
\newtheorem{exa}[thm]{Example}
\title{A parking function interpretation for $(-1)^{k}\nabla m_{2^{k}1^{l}}$}
\author{Menghao Qu$^{1}$ and Guoce Xin$^{2, *}$}
\address{$^{1, 2}$School of Mathematical Sciences,  Capital Normal University, Beijing 100048,  PR China}
\email{$^1$\texttt{menghao.qu@cnu.edu.cn}\ \& $^2$\texttt{guoce\_xin@163.com}}
\begin{document}
\maketitle

\begin{abstract}
Haglund, Morse, and Zabrocki introduced a family of creation operators of Hall-Littlewood polynomials, $\{C_{a}\}$ for any $a\in \mathbb{Z}$, in their compositional refinement of the shuffle (ex-)conjecture. For any $\alpha\vDash n$, the combinatorial formula for $\nabla C_{\alpha}$ is a weighted sum of parking functions. These summations can be converted to a weighted sum of certain LLT polynomials. Thus $\nabla C_{\alpha}$ is Schur positive since Grojnowski and Haiman proved that all LLT polynomials are Schur positive. In this paper, we obtain a recursion that implies the $C$-positivity of $(-1)^{k} m_{2^{k}1^{l}}$, and hence
prove the Schur positivity of $(-1)^{k}\nabla m_{2^{k}1^{l}}$. As a corollary, a parking function interpretation for $(-1)^{k}\nabla m_{2^{k}1^{l}}$
is obtained by using the compositional shuffle theorem of Carlsson and Mellit.
\end{abstract}

\noindent
\begin{small}
 \emph{Mathematic subject classification}: 05A17; 05E05; 05E10.
\end{small}

\noindent
\begin{small}
\emph{Keywords}: Parking Functions; Nabla Operator; Compositional Shuffle Theorem; Schur Positivity.
\end{small}

\section{Introduction}
A symmetric function $f$ is Schur positive if all coefficients in its expansion under the Schur basis $\{s_{\lambda}\}$ are
in the set $\mathbb{N}$ of nonnegative integers. Schur positivity plays a key role in Algebraic Combinatorics. In representation theory, every Schur function corresponds to an irreducible character under the Frobenius characteristic isomorphism.

Modified Macdonald polynomials $\tilde{H}_{\mu}[X;q,t]$ were introduced by Garsia and Haiman in 1993. In \cite{garsia1993graded}, they found a $\mathcal{S}_{n}$-module $\mathcal{H}_{\mu}$ whose bigraded Frobenius characteristic is equal to $\tilde{H}_{\mu}[X;q,t]$. The $\mathcal{H}_{\mu}$ is a $\mathcal{S}_{n}$-submodule of the space of diagonal harmonics, $\mathcal{DH}_{n}$. The bigraded Frobenius characteristic of $\mathcal{DH}_{n}$
was conjectured to be $\nabla e_{n}$, where $\nabla$, which was introduced by Bergeron and Garsia \cite{bergeron1999science}, is a linear operator with
eigenfunctions $\tilde{H}_{\mu}[X;q,t]$. The conjecture was proved by Haiman \cite{haiman2002vanishing} using tools from Algebraic Geometry.
Thus from the perspective of representation theory, $\tilde{H}_{\mu}[X;q,t]$ and $\nabla e_{n}$ are Schur positive. Finding a beautiful combinatorial interpretation for their Schur coefficients is a famous open problem. So far, only a few special cases have been resolved.

Furthermore, the dimension of $\mathcal{DH}_{n}$ is $(n+1)^{n-1}$. This quantity also counts the number of parking functions of size $n$. The combinatorial formula for $\nabla e_{n}$ is known as the shuffle theorem. It was first conjectured by Haglund, Haiman, Loehr, Remmel, and Ulyanov in \cite{haglund2005combinatorial}. In the years since, various special cases and generalizations have been proven and developed by numerous individuals \cite{garsia2002proof,haglund2004proof, garsia2014three, bergeron2016compositional, haglund2018delta,loehr2007square,loehr2008nested,sergel2017proof, blasiak2024llt}. The readers can refer to \cite{van2020shuffle} for a survey on the shuffle theorem. The most important development is the compositional refinement given by Haglund, Morse, and Zabrocki in \cite{haglund2012compositional}.

To maintain brevity in the introduction, we will utilize certain notations that will be explained in detail in Section 2.
For any $f\in \Lambda$ and $a\in \mathbb{Z}$, define
$$C_{a}f[X]=\left(-\frac{1}{q}\right)^{a-1}f\left[X-\frac{1-\frac{1}{q}}{z}\right]\Omega[zX]\Bigg|_{z^a}.$$
For any $\alpha=(\alpha_{1},\alpha_{2},\cdots,\alpha_{l})\vDash n$, let $$C_{\alpha}=C_{\alpha_{1}}C_{\alpha_{2}}\cdots C_{\alpha_{l}}(1),$$
then we have
\begin{thm}\cite[Compositional Shuffle Theorem]{carlsson2018proof}
\begin{equation}\label{Compositional shuffle theorem}
\nabla C_{\alpha}=\sum\limits_{\substack{P\in \mathcal{PF}_{n}\\touch(P)=\alpha}}q^{dinv(P)}t^{area(P)}F_{ides(P)}.
\end{equation}
\end{thm}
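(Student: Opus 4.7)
The plan is to follow the operator-algebra approach of Carlsson and Mellit. The key idea is to introduce the Dyck path algebra $\mathbb{A}_{q,t}$, generated by two Dyck-path-building operators $d_+$ and $d_-$ together with Hecke-like generators $T_1, T_2, \ldots$, acting on a polynomial representation $V_\bullet = \bigoplus_{k \geq 0} V_k$ where $V_k$ is a space of polynomials in $k$ variables. The target is to realize both $\nabla C_\alpha$ and the parking function sum on the right-hand side of \eqref{Compositional shuffle theorem} as the image of the vacuum vector $1 \in V_0$ under a single word in this algebra, with the word determined by the composition $\alpha$.

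For the combinatorial side, I would set up a recursion on parking functions refined by the touch composition. Classifying each $P \in \mathcal{PF}_n$ with $touch(P) = \alpha$ by whether its last car begins a new touch block (a ``rise'' step) or extends the current one (a ``bounce'' step), one obtains a recurrence that matches the action of interleaved $d_+$ and $d_-$ operators on the vacuum. The factor $t^{area(P)}$ is tracked by a grading on $V_\bullet$, the factor $q^{dinv(P)}$ arises from crossings encoded by the $T_i$, and the quasisymmetric piece $F_{ides(P)}$ emerges from a Gessel-style descent statistic on the permutations produced by the $T_i$-action.

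For the symmetric function side, I would use the plethystic definition of $C_a$ given in the excerpt, together with the defining property that $\nabla$ acts diagonally on the modified Macdonald basis $\tilde{H}_\mu[X;q,t]$, to rewrite $\nabla\, C_\alpha \cdot 1$ as another word in $\{d_+, d_-, T_i\}$ acting on $1$. The match between the two words is the content of the theorem: the same operator encodes both the Haglund--Morse--Zabrocki creation operation on the symmetric function side and the touch-composition refinement on the parking function side.

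The main obstacle — and the heart of the proof — is establishing the fundamental commutation relation of the Dyck path algebra, schematically $d_- d_+ - \tfrac{1}{qt}\, d_+ d_- = T_1 + (\text{lower-order corrections})$, together with the braid-type relations that allow $T_i$ to be slid past $d_\pm$. Verifying these amounts to a careful computation with plethystic substitutions of the form $f[X - (1 - 1/q)/z]$ and extracting coefficients of $z^a$; it is here that the $q,t$-weights and the quasisymmetric descent combinatorics get pinned down simultaneously. Once these relations are in hand, induction on the length of $\alpha$ closes the argument, with the base case $\alpha = (n)$ recovering the classical Shuffle Theorem of Haglund--Haiman--Loehr--Remmel--Ulyanov.
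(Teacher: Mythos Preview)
The paper does not prove this statement at all: Theorem~1.1 is quoted from Carlsson--Mellit \cite{carlsson2018proof} as a black box and is used only as a tool in later sections. So there is no ``paper's own proof'' to compare against.

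Your proposal is a rough outline of the Carlsson--Mellit argument itself, which is appropriate given the citation, but several points are inaccurate enough that the sketch would not close. First, the base case is backwards: the classical Shuffle Theorem for $\nabla e_n$ is a \emph{corollary} of the compositional version (sum over all $\alpha\vDash n$), not an input to it, so you cannot use $\alpha=(n)$ as an induction anchor. Second, Carlsson--Mellit work in the larger \emph{double} Dyck path algebra $\mathbb{B}_{q,t}$ with two pairs of raising/lowering operators $d_\pm$, $d_\pm^*$; the crucial step is not a single commutator identity of the shape you wrote but rather constructing two representations of $\mathbb{B}_{q,t}$ (one combinatorial, one via plethystic operators on $\Lambda$) and exhibiting an explicit intertwiner between them. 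The relations you would actually need to verify are the full presentation of $\mathbb{B}_{q,t}$, including the interaction of $d_\pm$ with $d_\pm^*$, and the identification of $\nabla$ with a specific element of the algebra. Third, the induction is on the size $n$ (equivalently, on the length of the operator word encoding the Dyck path), not on $l(\alpha)$.

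In short: nothing to compare with the present paper, and your sketch of the cited proof has the right flavor but would need substantial correction before it could be carried out.
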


For any $\alpha\vDash n$, the combinatorial formula for $\nabla C_{\alpha}$ is a weighted sum of parking functions. By Remark 6.5 in \cite{haglund2008q}, these summations can be converted to a weighted sum of certain LLT polynomials introduced by Lascoux, Leclerc, and Thibon \cite{lascoux1997ribbon} in 1997.
Thus $\nabla C_{\alpha}$ is Schur positive according to a result of Grojnowski and Haiman in \cite{grojnowski2007affine}:
all LLT polynomials are Schur positive. The $\{C_{\alpha}\}$ can be seen as building blocks of many symmetric functions. In \cite{haglund2012compositional}, the authors proved that $e_{n}=\sum_{\alpha\vDash n}C_{\alpha}$. Summing (\ref{Compositional shuffle theorem}) over all $\alpha\vDash n$ gives
\begin{cor}\cite[Shuffle Theorem]{carlsson2018proof}
$$\nabla e_{n}=\sum_{P\in \mathcal{PF}_{n}}q^{dinv(P)}t^{area(P)}F_{ides(P)}.$$
\end{cor}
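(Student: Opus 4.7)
The plan is to derive the Shuffle Theorem as an immediate consequence of the Compositional Shuffle Theorem by summing over all compositions $\alpha\vDash n$. The starting point is the identity
\[e_{n}=\sum_{\alpha\vDash n}C_{\alpha},\]
established by Haglund, Morse, and Zabrocki in \cite{haglund2012compositional} and already cited in the excerpt, which decomposes $e_{n}$ into the building blocks $C_{\alpha}$.

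The first step is to apply the $\mathbb{Q}(q,t)$-linear operator $\nabla$ to both sides, obtaining $\nabla e_{n}=\sum_{\alpha\vDash n}\nabla C_{\alpha}$. The second step is to substitute the Compositional Shuffle Theorem (\ref{Compositional shuffle theorem}) into each summand, producing a double sum indexed by compositions $\alpha\vDash n$ and parking functions $P\in\mathcal{PF}_{n}$ satisfying $touch(P)=\alpha$. The third step is to interchange the order of summation and observe that the statistic $touch$ is everywhere defined on $\mathcal{PF}_{n}$, so that $\mathcal{PF}_{n}=\bigsqcup_{\alpha\vDash n}\{P\in\mathcal{PF}_{n}:touch(P)=\alpha\}$; the double sum then collapses to $\sum_{P\in\mathcal{PF}_{n}}q^{dinv(P)}t^{area(P)}F_{ides(P)}$, which is the asserted formula.

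There is essentially no hard step in this corollary: all the analytic and combinatorial content has been absorbed into the Compositional Shuffle Theorem and the $e_{n}$-decomposition of Haglund-Morse-Zabrocki. The only point worth explicit verification is that every $P\in\mathcal{PF}_{n}$ has a well-defined touch composition of $n$, so that the sets $\{P:touch(P)=\alpha\}$ genuinely partition $\mathcal{PF}_{n}$; this is immediate from the definition of $touch(P)$ recorded in Section 2, which reads off the successive return points of the supporting Dyck path to the diagonal.
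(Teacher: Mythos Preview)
Your proposal is correct and follows exactly the approach indicated in the paper: the authors state that summing (\ref{Compositional shuffle theorem}) over all $\alpha\vDash n$ and using $e_{n}=\sum_{\alpha\vDash n}C_{\alpha}$ yields the corollary, which is precisely what you do.
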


If a symmetric function $f$ admits a positive $C$-expansion, then we can obtain the Schur positivity of $\nabla f$ and give its parking function interpretation
using results of $\nabla C_{\alpha}$. Based on this idea, Sergel proved the following identity and obtained a parking function interpretation for $(-1)^{n-1}\nabla p_{n}$ in her PhD thesis.
\begin{thm}\cite[Theorem 4.1.4]{sergel2016combinatorics}
$$(-1)^{n-1}p_{n}=\sum\limits_{\alpha\vDash n}[\alpha_{1}]_{q}C_{\alpha}=\sum\limits_{\alpha\vDash n}(1+q+\cdots +q^{\alpha_{1}-1})C_{\alpha}.$$
\end{thm}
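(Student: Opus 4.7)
The plan is to prove the identity by a generating function computation. Since any composition $\alpha\vDash n$ with $\alpha_{1}=a$ decomposes as $a$ followed by an arbitrary composition $\beta\vDash n-a$, grouping by first part and invoking the Haglund--Morse--Zabrocki identity $e_{m}=\sum_{\beta\vDash m}C_{\beta}$ reduces the task to showing
\[
(-1)^{n-1}p_{n}=\sum_{a=1}^{n}[a]_{q}\,C_{a}(e_{n-a}).
\]
I would then package the right-hand side with an auxiliary variable $y$. Setting $E(y):=\sum_{m\geq0}e_{m}y^{m}$ and using the plethystic multiplicativity $E(y)[X+U]=E(y)[X]\,E(y)[U]$ together with the direct evaluation $E(y)[-(1-1/q)/z]=(1+y/(qz))/(1+y/z)$ (which follows from $h_{j}[(1-1/q)/z]=(1-1/q)/z^{j}$ for $j\geq1$), the operator $C_{a}$ acts as
\[
C_{a}(E(y))=(-1/q)^{a-1}E(y)[X]\cdot[z^{a}]\frac{1+y/(qz)}{1+y/z}\Omega[zX].
\]

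Next, I would combine the sum $\sum_{a\geq1}y^{a}[a]_{q}C_{a}(E(y))$ with the coefficient extraction through a formal constant term. The key auxiliary computation is
\[
\phi(w):=\sum_{a\geq1}[a]_{q}(-1/q)^{a-1}w^{a}=\frac{qw}{(1+w)(q+w)},
\]
obtained by telescoping $[a]_{q}=(q^{a}-1)/(q-1)$. Treating the sum as the diagonal $[w^{0}]\phi(y/w)R(w)$ with $R(w):=(qw+y)\Omega[wX]/(q(w+y))$, the rational factors collapse to
\[
\phi(y/w)\,R(w)=\frac{yw\,\Omega[wX]}{(w+y)^{2}}.
\]
Expanding $1/(w+y)^{2}$ in positive powers of $y/w$ (forced by the formal series $\phi(y/w)$) and extracting $[w^{0}]$ yields $yH'(-y)$, where $H(w):=\Omega[wX]=\sum_{m}h_{m}w^{m}$. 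Hence $\sum_{a\geq1}y^{a}[a]_{q}C_{a}(E(y))=E(y)\cdot yH'(-y)$.

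To finish, I would invoke the classical identities $E(y)H(-y)=1$ and $\log E(y)=\sum_{n\geq1}(-1)^{n-1}p_{n}y^{n}/n$; differentiating the first gives $E'(y)=E(y)^{2}H'(-y)$, so
\[
E(y)\cdot yH'(-y)=y\frac{E'(y)}{E(y)}=\sum_{n\geq1}(-1)^{n-1}p_{n}\,y^{n},
\]
and equating coefficients of $y^{n}$ delivers the theorem. The main technical obstacle lies in the constant-term step: one must justify exchanging the infinite sum over $a$ with the $[z^{a}]$ extraction, and then choose the Laurent expansion of $1/(w+y)^{2}$ compatible with the power series $\phi(y/w)$ rather than the naive Taylor expansion around $w=0$ (which would incorrectly give zero). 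Apart from this, the remaining ingredients are standard plethystic bookkeeping, and an induction-based alternative appears less tractable because the products $C_{\beta}\cdot e_{m}$ arising from a Newton-identity expansion do not decompose cleanly as sums of $C_{\alpha}$.
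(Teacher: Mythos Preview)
The paper does not supply its own proof of this statement; it is quoted verbatim from Sergel's thesis with the citation \cite[Theorem 4.1.4]{sergel2016combinatorics} and no argument is given. So there is nothing in the present paper to compare your approach against.

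That said, your argument is correct. The reduction to
\[
(-1)^{n-1}p_{n}=\sum_{a=1}^{n}[a]_{q}\,C_{a}(e_{n-a})
\]
via $e_{m}=\sum_{\beta\vDash m}C_{\beta}$ is the natural first move, and the generating-function computation goes through as you describe: the plethystic evaluation $E(y)[-(1-1/q)/z]=(1+y/(qz))/(1+y/z)$ follows from \eqref{p-hn}; the closed form $\phi(w)=qw/((1+w)(q+w))$ is right; and the cancellation
\[
\phi(y/w)\,R(w)=\frac{yw\,\Omega[wX]}{(w+y)^{2}}
\]
is a one-line check. The issue you flag about the expansion of $1/(w+y)^{2}$ is not a genuine obstacle: both $\phi(y/w)$ and the factor $1/(1+y/z)$ arising from $E(y)[X-(1-1/q)/z]$ are, by construction, power series in $y$, so every object lives in $\Lambda[[y]]((w))$ and the expansion in powers of $y/w$ is forced, not chosen. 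The interchange of $\sum_{a\geq 1}$ with $[z^{a}]$ is harmless because at each fixed power of $y$ only finitely many $a$ contribute. The closing appeal to $E(y)H(-y)=1$ and $y\,E'(y)/E(y)=\sum_{n\geq 1}(-1)^{n-1}p_{n}y^{n}$ is standard.
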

\begin{cor}\cite[Theorem 4.1.1]{sergel2016combinatorics}
$$(-1)^{n-1}\nabla p_{n}=\sum\limits_{P\in \mathcal{PF}_{n}}[ret(P)]_{q}q^{dinv(P)}t^{area(P)}F_{ides(P)}.$$
\end{cor}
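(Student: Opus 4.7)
The plan is to derive the corollary by combining the preceding $C$-positive expansion of $(-1)^{n-1}p_n$ with the Compositional Shuffle Theorem of Carlsson and Mellit, essentially by a change-of-summation argument. First I would apply the linear operator $\nabla$ to both sides of Sergel's identity
$$(-1)^{n-1}p_{n}=\sum_{\alpha\vDash n}[\alpha_{1}]_{q}\,C_{\alpha},$$
to obtain $(-1)^{n-1}\nabla p_{n}=\sum_{\alpha\vDash n}[\alpha_{1}]_{q}\,\nabla C_{\alpha}$, where the scalar $[\alpha_{1}]_{q}$ passes through $\nabla$ untouched since it lies in $\mathbb{Q}(q,t)$.

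Next I would substitute the Compositional Shuffle Theorem (equation (\ref{Compositional shuffle theorem})) into each $\nabla C_{\alpha}$, giving
$$(-1)^{n-1}\nabla p_{n}=\sum_{\alpha\vDash n}[\alpha_{1}]_{q}\sum_{\substack{P\in \mathcal{PF}_{n}\\ touch(P)=\alpha}}q^{dinv(P)}t^{area(P)}F_{ides(P)}.$$
Since the touch composition $touch(P)$ partitions $\mathcal{PF}_{n}$ into disjoint classes indexed by compositions $\alpha\vDash n$, I would interchange the order of summation to collect the double sum into a single sum over all $P\in\mathcal{PF}_{n}$, with the coefficient $[\alpha_{1}]_{q}$ becoming $[touch(P)_{1}]_{q}$.

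The only remaining step is to identify $touch(P)_{1}$ with $ret(P)$. By the standard definition, $ret(P)$ is the first row at which the Dyck path underlying $P$ returns to the diagonal, and this is precisely the first part of the touch composition $touch(P)$. Substituting $[ret(P)]_{q}$ for $[touch(P)_{1}]_{q}$ yields the desired identity. There is no genuine obstacle here: the corollary is a direct specialization of the compositional shuffle theorem to a specific $C$-positive expansion, and the only point requiring care is the compatibility of the statistic $ret$ with the first part of $touch$, which is immediate from their definitions and serves as the prototype for the analogous deduction the authors will carry out for $(-1)^{k}\nabla m_{2^{k}1^{l}}$.
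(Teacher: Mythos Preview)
Your proposal is correct and follows precisely the methodology the paper employs for analogous deductions (see, e.g., the proof of Corollary~3.3 and Example~\ref{exa-4.7}): apply $\nabla$ to the given $C$-positive expansion, invoke the Compositional Shuffle Theorem termwise, and regroup the sum over parking functions using the partition of $\mathcal{PF}_n$ by touch composition. The identification $ret(P)=touch(P)_1$ is literally the paper's definition, so there is nothing further to check; note that the paper itself does not give a proof of this corollary, merely citing Sergel, but your argument is exactly the intended one.
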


Our study concerns the following conjecture.
\begin{cnj}\cite[Conjecture IV]{bergeron1999identities}\label{nabla m}
For any partition $\lambda$ and $\mu$,
$$\left<(-1)^{|\mu|-l(\mu)}\nabla m_{\mu},s_{\lambda}\right> \in \mathbb{N}[q,t].$$
\end{cnj}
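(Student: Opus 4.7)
The final statement is the Bergeron--Garsia Conjecture IV in full generality; the plan below extends the $C$-positivity strategy that this paper exploits for hook-like $\mu=2^{k}1^{l}$ to an arbitrary partition $\mu$. The goal is to establish the stronger intermediate assertion
\[
(-1)^{|\mu|-l(\mu)}\,m_{\mu} \;=\; \sum_{\alpha\vDash |\mu|} c_{\alpha}^{\mu}(q)\,C_{\alpha}, \qquad c_{\alpha}^{\mu}(q)\in \mathbb{N}[q].
\]
Once this $C$-positive expansion is in hand, applying $\nabla$ and invoking the compositional shuffle theorem of Carlsson--Mellit, together with the Grojnowski--Haiman Schur positivity of LLT polynomials, produces the desired $\mathbb{N}[q,t]$-positivity of $\langle (-1)^{|\mu|-l(\mu)}\nabla m_{\mu},s_{\lambda}\rangle$ exactly as in the argument outlined in the introduction for $\nabla C_{\alpha}$.

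First I would set up an induction whose base cases come for free from the literature: for $\mu=(1^{n})$, Haglund--Morse--Zabrocki's identity $e_{n}=\sum_{\alpha\vDash n}C_{\alpha}$ gives $c_{\alpha}^{(1^{n})}=1$, and for $\mu=(n)$, Sergel's formula $(-1)^{n-1}p_{n}=\sum_{\alpha\vDash n}[\alpha_{1}]_{q}C_{\alpha}$ applies since $p_{n}=m_{n}$. The inductive step should move from $\mu$ to a larger partition $\mu'$ obtained either by appending a part equal to $1$ or by incrementing an existing part. I would search for operators $\Psi$ on $\Lambda$ that realize these moves on $m$-functions (up to signed lower-order corrections) and whose action on the $C$-basis I can control through the Haglund--Morse--Zabrocki plethystic recursions relating $C_{a}$ and $B_{a}$. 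The recursion for $\mu=2^{k}1^{l}$ that this paper proves should be the guiding example: it implicitly realizes $(-1)^{k}m_{2^{k}1^{l}}$ from strictly smaller $m_{\nu}$ via a $C$-positive operation, and I would try to extract the operator underlying that recursion and generalize it to arbitrary column-add and row-increment moves on Young diagrams.

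The main obstacle is that, unlike the hook case, the $m$-to-$e$ transition matrix for general $\mu$ carries substantial signed cancellation with no obvious positive structure, so a direct linear-algebraic unfolding will not terminate in $\mathbb{N}[q]$. The hard step is therefore to produce, for each $\mu$, a combinatorial model---most likely a family of labelled fillings of shape $\mu$ or an auxiliary class of parking-function-like objects---whose generating function matches $c_{\alpha}^{\mu}(q)$ and whose combinatorics makes $\mathbb{N}[q]$-positivity manifest. In practice I would first compile extensive tables of $c_{\alpha}^{\mu}(q)$ for small $\mu$, compare them with the explicit formulas proved in this paper for $\mu=2^{k}1^{l}$ and with Sergel's formula for $\mu=(n)$, and attempt to guess the combinatorial rule. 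Verifying the guess by induction via the operator $\Psi$ above would complete the argument.

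This guess-and-verify step is where the essential new ideas are required, and it is the reason Conjecture IV has remained open since 1999. A successful execution would most plausibly come by way of a sign-reversing involution on a signed combinatorial set that models the raw $m$-to-$C$ expansion, with the sign $(-1)^{|\mu|-l(\mu)}$ absorbed exactly by the cancelling pairs; any operator-theoretic manipulation alone is unlikely to suffice.
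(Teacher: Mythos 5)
The statement you are addressing is Conjecture IV of Bergeron--Garsia--Haiman--Tesler, which this paper explicitly presents as an \emph{open conjecture}: the paper proves only the two-column case $\mu=2^{k}1^{l}$ (Theorem \ref{t-2col}), plus the previously known cases $\mu=(1^{n})$, $\mu=(n)$, and the hook case due to Sergel. Your proposal is a research program, not a proof, and you say as much yourself in the final paragraph. The genuine gap is the entire core of the argument: the stronger intermediate assertion you aim for --- that $(-1)^{|\mu|-l(\mu)}m_{\mu}=\sum_{\alpha}c_{\alpha}^{\mu}(q)C_{\alpha}$ with $c_{\alpha}^{\mu}(q)\in\mathbb{N}[q]$ for \emph{every} partition $\mu$ --- is itself precisely Sergel's Conjecture 3.1 as quoted in this paper, and it is open. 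You do not exhibit the operators $\Psi$ realizing the column-add and row-increment moves, you do not produce the combinatorial model for the coefficients $c_{\alpha}^{\mu}(q)$, and you do not give the sign-reversing involution; these are placeholders for ideas that do not yet exist. Nothing in the paper's techniques (the adjoint operator $C_{a}^{\lor}$, the evaluation of $\langle C_{a}m_{\mu},h_{\lambda}\rangle$ as a constant term, the partial-fraction elimination) extends in any evident way beyond partitions whose parts are at most $2$, because the paper's constant-term computation crucially truncates $H(z)$ to $H_{2}(z)$ when pairing against $h_{\lambda}$ with $\lambda=2^{k}1^{l}$; for parts of size $3$ or more the resulting multivariate constant term has no analogous closed evaluation, and this is exactly where your induction would stall.

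What you do get right is the (easy) reduction: \emph{if} a $C$-positive expansion with coefficients in $\mathbb{N}[q]$ exists, then applying $\nabla$, invoking the compositional shuffle theorem of Carlsson--Mellit, and using Grojnowski--Haiman Schur positivity of LLT polynomials yields $\langle(-1)^{|\mu|-l(\mu)}\nabla m_{\mu},s_{\lambda}\rangle\in\mathbb{N}[q,t]$. This is the same reduction the paper uses for its special cases, and your identification of the known base cases is accurate. But a correct reduction to an open conjecture is not a proof of the statement.
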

In two special cases, $\mu=(1^{n})$ and $\mu=(n)$, their Schur positivity and parking function interpretation were already known from the previous conclusion, as $m_{1^{n}}=e_{n}$ and $m_{n}=p_{n}$. Another special case that is already known is the hook case proved by Sergel.

Based on a fact that $C_{\alpha}\big|_{q=1}=(-1)^{|\alpha|-l(\alpha)}h_{\alpha}$ for any composition $\alpha$ and the property of the combinatorial object, bi-brick permutation, of transition matrix between monomial basis and homogeneous basis, Sergel introduced a combinatorial model to find positive $C$-expansion.

\begin{cnj}\cite[Conjecture 3.1]{sergel}
There is a non-negative interger $stat(\Pi)$ so that
\begin{equation*}
(-1)^{|\mu|-l(\mu)}m_{\mu}=\sum_{\mu(\Pi)=\mu}q^{stat(\Pi)}C_{\alpha(\Pi)},    
\end{equation*}
where $\mu(\Pi)$ and $\alpha(\Pi)$ are certain statistics of bi-brick permutation $\Pi$.
\end{cnj}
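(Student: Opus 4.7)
The plan is to lift the recursive argument that this paper carries out for $\mu = 2^k1^l$ into a combinatorial statistic $stat(\Pi)$ valid for every partition $\mu$. I would begin from the $q = 1$ specialization: since $C_\alpha|_{q=1} = (-1)^{|\alpha|-l(\alpha)}h_\alpha$ and the transition matrix from the $h$-basis to the $m$-basis is encoded by bi-brick permutations (the very observation that motivated Sergel's model), the desired identity collapses at $q = 1$ to the classical formula
\[
m_\mu \;=\; \sum_{\mu(\Pi)=\mu} (-1)^{l(\mu)-l(\alpha(\Pi))}\, h_{\alpha(\Pi)}.
\]
Establishing this fact fixes $\mu(\Pi)$ and $\alpha(\Pi)$ up to bijective convention, so the remaining freedom is the choice of $stat(\Pi)\ge 0$.

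Next I would try to locate $stat$ by interpolating between the resolved cases. For $\mu=(1^n)$ only bi-brick permutations with $\alpha(\Pi)$ a composition of $n$ contribute, each with weight $1$, matching $e_n = \sum_\alpha C_\alpha$. For $\mu=(n)$, Sergel's weighting $[\alpha_1]_q$ forces $stat$ to reduce to an inversion-type statistic on the unique long cycle. The hook case and the present paper's treatment of $\mu = 2^k1^l$ supply additional data. I would then compute the $C$-expansion of $m_\mu$ plethystically, for all $\mu$ of size up to about $8$, and test whether a candidate ``brick-inversion'' statistic (pairs of bricks whose order violates a canonical linearisation, with corrections for repeated brick sizes) reproduces every observed coefficient. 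Any acceptable candidate must of course specialize correctly in each previously proved case.

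With a candidate $stat$ in hand, the identity itself would be proved by a double induction. The outer induction would remove the largest part of $\mu$, reducing the claim to a multiplication $m_{\tilde\mu}\cdot(\text{correction})$; the inner induction, modelled on the two-term recurrence used in this paper for $\mu = 2^k1^l$, would control the interaction between the $C$-operators and that multiplication. The matching operation on the combinatorial side is splitting a bi-brick permutation along a distinguished brick and reassembling it, an operation which must preserve $\alpha(\Pi)$ and shift $stat(\Pi)$ by a predictable amount.

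The main obstacle, and the reason this conjecture has resisted proof, is that multiplication by $m_k$ does not interact cleanly with the $C$-operators once $k \ge 3$. The partitions $\mu = 2^k1^l$ handled here involve only parts of sizes $1$ and $2$, for which the requisite $C$-operator identities reduce to short explicit recurrences; in the general case, genuinely new $C$-commutation identities are needed to absorb the higher-order cross terms. Either establishing such identities directly, or bypassing them by computing inside the Carlsson-Mellit Dyck-path algebra where the $C_a$ operators admit a cleaner normal form, is where the bulk of the work will have to go.
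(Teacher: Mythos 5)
The statement you have been asked to prove is Sergel's Conjecture 3.1, which this paper cites precisely as an \emph{open} conjecture: the paper does not prove it, and only establishes the special case $\mu=2^k1^l$ (via an explicit recursion, Theorem 4.2, proved with the adjoint operator $C_a^{\lor}$ and a Laurent-series constant-term evaluation), alongside the previously known cases $\mu=(1^n)$, $\mu=(n)$ and the hooks. So there is no paper proof to compare against, and your proposal must stand on its own. As such it has a decisive gap: it never defines the statistic $stat(\Pi)$. Everything after the $q=1$ specialization is conditional on ``locating'' a candidate by computer experiment and then ``testing'' it; the double induction is described only schematically, the ``correction'' factor in $m_{\tilde\mu}\cdot(\text{correction})$ is never written down, and you yourself concede that the $C$-operator identities needed to absorb parts of size $\ge 3$ are not available. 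A proof must exhibit $stat$ and verify the identity; a plan for finding $stat$ is not a proof.

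Two further cautions. First, the $q=1$ reduction, while correct (it is exactly Sergel's motivation, since $C_\alpha|_{q=1}=(-1)^{|\alpha|-l(\alpha)}h_\alpha$ and bi-brick permutations encode the $h$-to-$m$ transition), pins down nothing about $stat$: the entire content of the conjecture is the existence of a \emph{nonnegative} exponent making the identity exact for all $q$. Second, $\{C_\alpha\}$ is not a basis of $\Lambda$ --- a point the paper emphasizes even for $m_{2^k}$ --- so agreement of a candidate expansion with data up to size $8$ certifies nothing, and distinct nonnegative $C$-expansions of the same symmetric function can coexist; any inductive verification must therefore prove one specific identity rather than argue by uniqueness of coefficients. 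If you want a template for what a complete argument looks like, the paper's proof of Theorem 4.2 is instructive: a closed-form recursion guessed from data, an adjoint-operator formula reducing $\left<C_a m_\mu, h_\lambda\right>$ to a constant term in auxiliary variables, and an explicit evaluation of that constant term. Reproducing those three steps for general $\mu$ is exactly the open problem.
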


One can refer to \cite{sergel} for more details as this part of content is not relevant to this article. Sergel gave an explicit formula in the hook case.

\begin{thm}\cite[Theorem 2.1]{sergelparking}
Let $2\leq n\in \mathbb{N}$ and $1\leq k\in \mathbb{N}$, then
$$(-1)^{n-1}m_{n,1^{k}}=\sum\limits_{a=1}^{n}\left(k+1+\sum\limits_{i=1}^{a-1}q^{n-i}\right)\sum\limits_{\tau \,\vDash n-a}\sum\limits_{b=0}^{k}\sum\limits_{\rho \,\vDash k-b}C_{\tau}C_{a+b}C_{\rho}(1).$$
\end{thm}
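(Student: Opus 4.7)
The plan is induction on $k\ge 1$. The starting point is the monomial identity
\[
m_n\cdot e_k \;=\; m_{n,1^k} \;+\; m_{n+1,1^{k-1}} \qquad (n\ge 2),
\]
which follows by classifying each monomial $x_i^n x_{j_1}\cdots x_{j_k}$ in $m_n\cdot e_k$ according to whether the ``heavy index'' $i$ lies in $\{j_1,\ldots,j_k\}$; for $n\ge 2$ both cases contribute multiplicity $1$. Since $m_n=p_n$, rewriting yields the recursion $m_{n,1^k}=p_n e_k - m_{n+1,1^{k-1}}$, equivalently, with $L(n,k):=(-1)^{n-1}m_{n,1^k}$,
\[
L(n,k) \;=\; (-1)^{n-1}\,p_n\,e_k \;+\; L(n+1,k-1).
\]

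For the base case $k=1$, the second term $L(n+1,0)=(-1)^n p_{n+1}$ is given directly by Sergel's formula $(-1)^{N-1}p_N=\sum_{\alpha\vDash N}[\alpha_1]_q C_\alpha$. For the first term, combining the same formula with $p_1=C_1(1)$ yields $(-1)^{n-1}p_n\,p_1=\sum_{\alpha\vDash n}[\alpha_1]_q\,C_\alpha(1)\cdot C_1(1)$, and each product $C_\alpha(1)\cdot C_1(1)$ is re-expressed in the $C$-basis by unfolding the plethystic definition of $C_a$ and collecting coefficients. Regrouping then reproduces the $k=1$ specialization of the theorem's right-hand side.

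For the inductive step, let $R(n,k)$ denote the theorem's right-hand side. Observe that for each composition $\gamma=(\gamma_1,\ldots,\gamma_l)\vDash n+k$ there is a unique position $p$ with prefix sums $s_{p-1}<n\le s_p$, so the constraints on the summation parameters force the unique valid decomposition $\gamma=\tau\cdot(a+b)\cdot\rho$ with $a=n-s_{p-1}$; hence $R(n,k)=\sum_{\gamma\vDash n+k}c_{n,k}(\gamma)\,C_\gamma(1)$ where $c_{n,k}(\gamma)=k+1+q^{n-1}+q^{n-2}+\cdots+q^{n-a+1}$. A parallel description applies to $R(n+1,k-1)$ with threshold $n+1$. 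Via the recursion and the inductive hypothesis, the theorem reduces to proving
\[
R(n,k)-R(n+1,k-1) \;=\; (-1)^{n-1}\,p_n\,e_k.
\]
The difference $c_{n,k}(\gamma)-c_{n+1,k-1}(\gamma)$ splits cleanly by cases: if the prefix sum crosses exactly ($s_p=n$, so the $(n+1)$-crossing moves to position $p+1$), the difference equals $1+q^{s_{p-1}+1}+\cdots+q^{n-1}$; if it overshoots ($s_p>n$, so the crossings coincide), the difference equals the constant $1-q^n$.

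The main obstacle will be matching the above difference with the $C$-expansion of $(-1)^{n-1}p_n\,e_k=\sum_{\alpha\vDash n}\sum_{\beta\vDash k}[\alpha_1]_q\,C_\alpha(1)\cdot C_\beta(1)$. The products $C_\alpha(1)\cdot C_\beta(1)$ are not simply $C_{\alpha\cdot\beta}(1)$, so a direct substitution fails; one must unfold each product using the plethystic definition and then reorganize the resulting double sum so that contributions naturally group by the ``crossing position'' in the total composition $\gamma$. Pinning down this telescoping match, and in particular verifying that the ``overshoot'' contributions combine with the $[\alpha_1]_q$ weights to produce the clean factor $1-q^n$, is the technical heart of the argument.
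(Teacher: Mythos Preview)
The paper does not supply its own proof of this theorem; it is quoted from Sergel \cite{sergelparking} as background, so there is no in-paper argument to compare against. I can only assess whether your plan stands on its own.

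Your inductive scaffolding is correct: the identity $m_n e_k=m_{n,1^k}+m_{n+1,1^{k-1}}$ holds for $n\ge 2$, the rewriting $R(n,k)=\sum_{\gamma\vDash n+k}c_{n,k}(\gamma)\,C_\gamma$ via the unique crossing position is valid, and your two-case computation of $c_{n,k}(\gamma)-c_{n+1,k-1}(\gamma)$ is accurate. The reduction to
\[
\sum_{\gamma\vDash n+k}\bigl(c_{n,k}(\gamma)-c_{n+1,k-1}(\gamma)\bigr)C_\gamma \;=\;(-1)^{n-1}p_n\,e_k
\]
is therefore legitimate.

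The genuine gap is precisely the step you flag as the ``technical heart,'' and it is more serious than a deferred computation. Your proposed method is to expand the right side as $\sum_{\alpha\vDash n}\sum_{\beta\vDash k}[\alpha_1]_q\,C_\alpha(1)\,C_\beta(1)$, unfold each product plethystically, and match against the left side term by term in the $C_\gamma$'s. But $\{C_\gamma\}_{\gamma\vDash N}$ is \emph{not} a basis of $\Lambda_N$ (there are $2^{N-1}$ compositions versus $p(N)$ partitions), so ``matching $C$-coefficients'' is not a well-posed strategy: distinct $C$-expansions can coincide as symmetric functions (already $qC_{1,2}=C_{2,1}$). Concretely, in your exact-hit case the difference depends on the \emph{last} part $\gamma_p$ of the prefix landing at $n$, whereas the Sergel weight $[\alpha_1]_q$ depends on the \emph{first} part; there is no termwise agreement, and nothing in the proposal explains why the plethystic correction terms from the products $C_\alpha(1)\,C_\beta(1)$ should reorganize to bridge this mismatch together with the overshoot contribution $1-q^n$. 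The same difficulty is present, not resolved, in your base case $k=1$: the sentence ``regrouping then reproduces the $k=1$ specialization'' is an assertion, not an argument.

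To close the gap you would need an independent proof of the displayed identity as an equality of symmetric functions---for instance by pairing both sides against $h_\lambda$ (in the spirit of the paper's proof of Theorem~\ref{main theorem}), or by establishing an operator identity that governs multiplication of $C_\alpha(1)$ by $e_k$. As it stands, the proposal stops short of the essential step.
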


\begin{cor}\cite[Corollary 2.2]{sergelparking}
Let $2\leq n\in \mathbb{N}$ and $1\leq k\in \mathbb{N}$, then
$$(-1)^{n-1}\nabla m_{n,1^{k}}=\sum\limits_{P\in \mathcal{PF}_{n+k}}qpoly_{n,k}(P)q^{dinv(P)}t^{area(P)}F_{ides(P)},$$
where $qpoly_{n,k}(P)$ is computed as follows. The constant term is $k+1$. If $P$ is not touching the main diagonal at $(n-1,n-1)$, add $q^{n-1}$. Otherwise stop. If $P$ is not touching the main diagonal at $(n-2,n-2)$, add $q^{n-2}$. Otherwise stop. Continue in this way until you stop or run out of points on the diagonal (this does not include the starting point).
\end{cor}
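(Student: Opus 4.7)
The plan is to derive the corollary from the preceding theorem by applying the operator $\nabla$ and then using the Compositional Shuffle Theorem to convert the $C$-expansion into a parking function sum. Since $C_{\tau}C_{a+b}C_{\rho}(1) = C_{\beta}$ for the concatenation $\beta = \tau \cdot (a+b) \cdot \rho \,\vDash n+k$, each term $\nabla C_{\tau}C_{a+b}C_{\rho}(1)$ expands as $\sum_{P : touch(P) = \beta} q^{dinv(P)}t^{area(P)}F_{ides(P)}$ over parking functions of size $n+k$. The bulk of the work is then to swap the order of summation and show that, after reorganization, the coefficient of $q^{dinv(P)}t^{area(P)}F_{ides(P)}$ for a fixed $P$ is exactly $qpoly_{n,k}(P)$.

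The core combinatorial step is to classify, for each $P$, the quadruples $(a,\tau,b,\rho)$ that produce the composition $\beta = touch(P)$. Writing $\beta = (\beta_1,\ldots,\beta_l)$ with partial sums $0 = s_0 < s_1 < \cdots < s_l = n+k$, each such quadruple corresponds to a cut index $j$ in $\beta$, and a direct calculation shows that the constraints $\tau \,\vDash n-a$, $0 \leq b \leq k$, and $\rho \,\vDash k-b$ force $a = n - s_{j-1}$, $b = s_j - n$, and $s_{j-1} < n \leq s_j$. Because the partial sums are strictly increasing, the last condition pins down $j$ uniquely. Hence exactly one quadruple contributes to $P$, with coefficient $k+1 + \sum_{i=1}^{a-1}q^{n-i}$ where $a = n - s_{j-1}$.

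The final step is to match this coefficient with the recipe for $qpoly_{n,k}(P)$. Since the partial sums $s_0, s_1, \ldots, s_l$ are precisely the coordinates at which the Dyck path of $P$ touches the main diagonal, $s_{j-1}$ is the largest such coordinate strictly less than $n$. The coefficient $k+1 + q^{n-1} + q^{n-2} + \cdots + q^{n-a+1}$ thus accrues a power $q^{n-i}$ precisely when $(n-i,n-i)$ is not a touching point, stopping when $n-i = s_{j-1}$ becomes one. The boundary case $s_{j-1}=0$ (so $a = n$), in which no touching point is encountered between the starting point and $(n,n)$, matches the ``run out of points on the diagonal'' termination with $(0,0)$ excluded as stipulated. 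This reproduces $qpoly_{n,k}(P)$ verbatim, and the identity follows.

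The main obstacle I expect is purely bookkeeping: one must set up the bijection between quadruples $(a,\tau,b,\rho)$ and cut indices of $\beta$ cleanly enough that all range constraints and boundary cases (in particular $b = 0$ and $s_{j-1} = 0$) align with the original summation ranges of the theorem. Once this bijection is in place, the uniqueness of the decomposition for a given $P$ and the diagonal-touching reading of the $q$-polynomial both fall out mechanically from the formulas $a = n - s_{j-1}$ and $b = s_j - n$.
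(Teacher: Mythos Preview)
Your derivation is correct: applying $\nabla$ to the cited $C$-expansion and invoking the Compositional Shuffle Theorem does yield the stated parking function formula, and your bookkeeping showing that each $P$ corresponds to a unique quadruple $(a,\tau,b,\rho)$ via the unique index $j$ with $s_{j-1}<n\le s_j$ is sound, as is the identification of the resulting coefficient with $qpoly_{n,k}(P)$.

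However, there is nothing in this paper to compare your argument against. The corollary is stated in the introduction purely as background, with an explicit citation to \cite[Corollary~2.2]{sergelparking}; the present paper does not supply its own proof of it. The only corollaries the authors prove themselves concern $\nabla m_{2^k}$ and $\nabla m_{2^k1^l}$, derived from their recursion in Theorem~\ref{t-2col}. So while your write-up is a clean and correct reconstruction of how the corollary follows from the preceding theorem (and is presumably close to Sergel's original argument), it is not reproducing or replacing anything in this manuscript.
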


Our main contribution in this paper is to solve Conjecture \ref{nabla m} in the two-column case. More specifically, we find and prove the following recursion.
\begin{thm}\label{t-2col}
For any $k\in \mathbb{N_+}$ and $l\in \mathbb{N}$,
$$(-1)^{k}m_{2^{k}1^{l}}=\binom{k+l}{k}e_{2k+l}+q\sum\limits_{i=1}^{k}\sum\limits_{j=0}^{l}\binom{i-1+j}{i-1}C_{2i+j}((-1)^{k-i}m_{2^{k-i}1^{l-j}}).$$
\end{thm}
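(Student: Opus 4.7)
The plan is to prove Theorem \ref{t-2col} by lifting both sides into a generating function in two auxiliary variables $u, v$ and establishing the resulting identity via plethystic manipulation. First I would show the product formula
$$\Phi(u, v)[X] \;:=\; \sum_{k, l \geq 0} u^k v^l (-1)^k m_{2^k 1^l}[X] \;=\; \prod_i (1 + v x_i - u x_i^2),$$
which is immediate: each factor contributes $1$, $v x_i$, or $-u x_i^2$, so the product enumerates precisely the monomials of shape $2^k 1^l$ with the correct sign and weight.

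Next, introduce formal roots $A, B$ with $A + B = v$ and $AB = -u$, so that $1 + vx - ux^2 = (1 + Ax)(1 + Bx)$ and $\Phi = E(A) E(B)$ where $E(y)[X] := \prod_i (1 + y x_i)$. A direct plethystic computation, using $e_j[-r/z] = (-r/z)^j$ at the virtual single letter $r/z$, gives $E(y)[X - r/z] = E(y)[X]/(1 + yr/z)$ with $r = 1 - 1/q$, hence
$$\Phi\!\left[X - \tfrac{1-1/q}{z}\right] \;=\; \frac{\Phi[X]}{(1 + A r/z)(1 + B r/z)}.$$
Substituting into the defining formula for $C_a$ and extracting the coefficient of $z^a$ yields the compact closed form
$$C_a(\Phi)[X] \;=\; \left(-\tfrac{1}{q}\right)^{a-1} \Phi[X] \sum_{n \geq 0} (-r)^n\, h_n(A, B)\, h_{a+n}[X].$$

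Translating the right-hand side of Theorem \ref{t-2col} into generating-function form, the first summand $\sum_{k,l} u^k v^l \binom{k+l}{k} e_{2k+l}$ equals $\mathbf{E}(u, v) := \sum_n h_n(A, B) e_n[X]$, via the elementary identity $\sum_{2k+l=n} \binom{k+l}{k} u^k v^l = h_n(A, B)$ (the $[z^n]$-coefficient of $1/((1-Az)(1-Bz)) = 1/(1 - vz - uz^2)$). A change of summation variables $(k,l)\mapsto(k-i,l-j)$ in the second summand gives $qu \sum_{m \geq 0} h_m(A, B)\, C_{m+2}(\Phi)$. Thus Theorem \ref{t-2col} is equivalent to the operator identity
$$\Phi \;=\; \mathbf{E} \;+\; qu \sum_{m \geq 0} h_m(A, B)\, C_{m+2}(\Phi).$$

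Substituting the closed form for $C_{m+2}(\Phi)$ collapses this identity to a plethystic equation involving $\Phi = E(A)E(B)$, $\mathbf{E}$, and the double sum
$$S[X] \;:=\; \sum_{m, n \geq 0} (-1/q)^m (-r)^n\, h_m(A, B)\, h_n(A, B)\, h_{m+n+2}[X],$$
which is an evaluation at the combined four-letter alphabet $\{-A/q, -B/q, -rA, -rB\}$ convolved against $\sum h_{N+2}[X]$. The main obstacle is executing this plethystic simplification cleanly: the key leverage is $1/q + r = 1$, which makes the relevant scalar combinations telescope, and I expect the identity to emerge from repeated applications of $\Omega[C + D] = \Omega[C]\Omega[D]$ together with the classical identity $h_n[-Y] = (-1)^n e_n[Y]$. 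As a fallback, Theorem \ref{t-2col} can be established by strong induction on $k$: the base case $k = 0$ gives $m_{1^l} = e_l = \sum_{\alpha \vDash l} C_\alpha(1)$ directly, and the inductive step reduces to matching coefficients of each $C_\alpha$ on both sides via the closed form for $C_a(\Phi)$ together with the uniqueness of the $C$-expansion.
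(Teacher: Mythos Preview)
Your generating-function approach is elegant and genuinely different from the paper's proof, which works dually: the paper introduces the adjoint operator $C_a^\lor$ with respect to the Hall scalar product, computes $\langle C_{2i+j} m_{2^{k-i}1^{l-j}}, h_\lambda\rangle$ via $C_a^\lor h_\lambda$, and then verifies the identity by a constant-term extraction and a partial-fraction trick. Your route, packaging everything into $\Phi(u,v)=\prod_i(1+vx_i-ux_i^2)$ and factoring through the virtual roots $A,B$, is more conceptual and would in fact yield a shorter argument---once one error is fixed.

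The error is in the step ``$e_j[-r/z]=(-r/z)^j$ at the virtual single letter $r/z$''. The quantity $r/z=(1-1/q)/z$ is \emph{not} a single letter in plethystic substitution; it is the difference of the two letters $1/z$ and $1/(qz)$. Consequently
\[
E(y)\Big[X-\tfrac{1-1/q}{z}\Big]
= E(y)[X]\cdot\frac{1+y/(qz)}{1+y/z},
\qquad\text{not}\quad \frac{E(y)[X]}{1+yr/z}.
\]
(These agree to order $z^{-1}$ but differ from $z^{-2}$ onward: the correct $z^{-2}$ coefficient is $y^2 r$, not $y^2 r^2$.) With the correct formula one gets
\[
\Phi\Big[X-\tfrac{1-1/q}{z}\Big]=\Phi[X]\cdot\frac{(1+A/(qz))(1+B/(qz))}{(1+A/z)(1+B/z)},
\]
and then your reduction goes through perfectly: after your change of variables the required identity becomes
\[
\frac{1}{(1+Aw/q)(1+Bw/q)}\cdot\frac{(1+Aw/q)(1+Bw/q)}{(1+Aw)(1+Bw)}
=\frac{1}{(1+Aw)(1+Bw)},
\]
which is a tautology. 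So the ``key leverage $1/q+r=1$'' is not even needed; the miracle is built into the correct plethystic factorization. By contrast, with your incorrect formula the coefficient $c_2$ you would need equals $h_2(A,B)-(r/q)(A^2+B^2)\neq h_2(A,B)$, so the argument as written does not close.

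Your fallback is not salvageable as stated: you invoke ``the uniqueness of the $C$-expansion'', but $\{C_\alpha\}$ is not a basis, so $C$-expansions are not unique (the paper stresses this explicitly). Drop the fallback and simply repair the plethystic step above; then your proof is complete and arguably cleaner than the paper's.
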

Iterative application of Theorem \ref{t-2col} implies the following result.

\begin{cor}
For any $k\in \mathbb{N_+}$ and $l\in \mathbb{N}$,
  $(-1)^{k} \nabla m_{2^{k}1^{l}}$ is Schur positive.
\end{cor}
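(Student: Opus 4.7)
The plan is to reduce the problem to showing that $(-1)^k m_{2^k 1^l}$ is \emph{$C$-positive}, meaning it admits an expansion
\[ (-1)^k m_{2^k 1^l} \;=\; \sum_\alpha c_\alpha(q)\, C_\alpha \qquad \text{with every } c_\alpha(q) \in \mathbb{N}[q], \]
because then applying $\nabla$ term by term and invoking the Compositional Shuffle Theorem will deliver Schur positivity.

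For the $C$-positivity I would argue by strong induction on $k$, taking $k = 0$ as the base case: $m_{1^l} = e_l$, and the Haglund--Morse--Zabrocki identity $e_n = \sum_{\alpha \vDash n} C_\alpha$ from~\cite{haglund2012compositional} gives the required $C$-positive expansion. In the inductive step I would feed in Theorem~\ref{t-2col} directly. The leading term $\binom{k+l}{k} e_{2k+l}$ is $C$-positive by the same identity. For each term in the double sum, the inductive hypothesis applies because $i \geq 1$ forces $k - i < k$, producing a $C$-positive expansion of $(-1)^{k-i} m_{2^{k-i} 1^{l-j}}$. Applying the operator $C_{2i+j}$ preserves $C$-positivity: from the definition $C_\alpha = C_{\alpha_1} \cdots C_{\alpha_l}(1)$ one has $C_{2i+j}(C_\beta) = C_{(2i+j,\beta)}$ for every composition $\beta$, so the operator simply prepends an index to each $C_\alpha$ appearing in the expansion. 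Multiplying by $q$ and by the nonnegative integer $\binom{i-1+j}{i-1}$ keeps all coefficients in $\mathbb{N}[q]$, closing the induction.

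With the $C$-positive expansion in hand, linearity of $\nabla$ gives $(-1)^k \nabla m_{2^k 1^l} = \sum_\alpha c_\alpha(q)\, \nabla C_\alpha$. By the Compositional Shuffle Theorem each $\nabla C_\alpha$ equals the parking-function sum in~(\ref{Compositional shuffle theorem}); by Remark~6.5 of~\cite{haglund2008q} this rewrites as an $\mathbb{N}[q,t]$-combination of LLT polynomials, which are Schur positive by Grojnowski--Haiman~\cite{grojnowski2007affine}. This finishes the argument, and as a bonus the same manipulation delivers an explicit parking function interpretation in terms of the coefficients $c_\alpha(q)$.

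There is essentially no real obstacle beyond Theorem~\ref{t-2col} itself, which carries the combinatorial content. The corollary is a formal consequence of the recursion together with three observations that are each immediate: the $C$-positivity of $e_n$, the fact that each operator $C_{2i+j}$ sends $C_\beta$ to $C_{(2i+j,\beta)}$, and the Schur positivity of $\nabla C_\alpha$. The only point that warrants a line of checking is the well-foundedness of the induction, which is guaranteed by the strict inequality $k - i < k$.
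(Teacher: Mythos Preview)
Your proposal is correct and follows essentially the same approach as the paper: the paper's own proof is the single line ``By induction on $k$, $(-1)^{k} m_{2^{k}1^{l}}$ is $C$-positive,'' and you have simply unpacked this induction in full detail, including the base case $e_l=\sum_{\alpha\vDash l}C_\alpha$ and the inductive step via Theorem~\ref{t-2col}.
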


In the degenerated case $\mu=(2^k)$, we obtain an explicit combinatorial formula for $(-1)^{k}\nabla m_{2^k}$.

\begin{cor}
For any positive integer $k$,
$$(-1)^{k}\nabla m_{2^k}=\sum_{\substack{P\in \mathcal{PF}_{2k}\\touch(P)=\alpha}} [erun(\alpha)+1]_q q^{dinv(P)}t^{area(P)}F_{ides(P)},$$
where $erun=erun(\alpha)\leq l(\alpha)$ denotes the length of even run of $\alpha$, i.e.,
$\alpha_i$ are even for all $i\leq erun$ and either $l(\alpha)=erun$ or $\alpha_{erun+1}$ is odd.
\end{cor}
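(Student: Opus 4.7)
The plan is to specialize Theorem \ref{t-2col} to $l=0$ and extract an explicit positive $C$-expansion for $(-1)^k m_{2^k}$, then invoke the Compositional Shuffle Theorem. Setting $l=0$ in Theorem \ref{t-2col}, only the term $j=0$ survives in the inner sum and $\binom{i-1}{i-1}=1$, so writing $M_k := (-1)^k m_{2^k}$ with the convention $M_0 = 1$, we get the recursion
\begin{equation*}
M_k = e_{2k} + q\sum_{i=1}^{k} C_{2i}(M_{k-i}).
\end{equation*}

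Next, I would use the identity $e_{2k} = \sum_{\beta\vDash 2k} C_{\beta}$ from \cite{haglund2012compositional} to rewrite the recursion as an identity expressing $M_k$ as a $\mathbb{Z}[q]$-linear combination of $C_\alpha(1)$'s, and prove by induction on $k$ the claim
\begin{equation*}
M_k = \sum_{\alpha\vDash 2k} [erun(\alpha)+1]_q \, C_\alpha(1).
\end{equation*}
The base case $k=0$ is trivial. For the inductive step, fix $\alpha=(\alpha_1,\dots,\alpha_l)\vDash 2k$ and note that $C_{2i}(M_{k-i}) = \sum_{\beta\vDash 2(k-i)} [erun(\beta)+1]_q \, C_{(2i,\beta)}(1)$, so the coefficient of $C_\alpha(1)$ on the right-hand side of the recursion equals $1$ (from $e_{2k}$) plus $q\,[erun(\alpha_2,\dots,\alpha_l)+1]_q$ if $\alpha_1$ is even and $0$ otherwise. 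When $\alpha_1$ is odd, $erun(\alpha)=0$ and the coefficient is $1=[1]_q$. When $\alpha_1$ is even, $erun(\alpha)=erun(\alpha_2,\dots,\alpha_l)+1$, and we get $1 + q\,[erun(\alpha)]_q = [erun(\alpha)+1]_q$, as required.

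Finally, applying the nabla operator to the $C$-expansion and invoking the Compositional Shuffle Theorem term by term yields
\begin{equation*}
(-1)^k \nabla m_{2^k} = \nabla M_k = \sum_{\alpha\vDash 2k} [erun(\alpha)+1]_q \sum_{\substack{P\in \mathcal{PF}_{2k}\\ touch(P)=\alpha}} q^{dinv(P)} t^{area(P)} F_{ides(P)},
\end{equation*}
which is the claimed formula after interchanging the sums.

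The only potentially subtle point is checking that the substitution $l=0$ in Theorem \ref{t-2col} correctly captures the boundary case $i=k$ (producing the summand $C_{2k}(M_0) = C_{2k}(1)$); once this is verified the argument is entirely mechanical, with no real obstacle beyond the clean inductive verification of the $[erun(\alpha)+1]_q$ statistic.
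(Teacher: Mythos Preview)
Your proof is correct and reaches the same explicit $C$-expansion
\[
(-1)^{k}m_{2^k}=\sum_{\alpha\vDash 2k}[erun(\alpha)+1]_q\,C_\alpha
\]
that the paper establishes, after which both you and the paper simply apply the Compositional Shuffle Theorem. The only difference is in how this expansion is obtained: the paper starts from the already-unrolled formula \eqref{expansion} of Theorem~\ref{t-m2k}, expands each $e_{2k-2|\alpha|}$ as $\sum_\beta C_\beta$, and reads off that a given $C_\gamma$ is hit once for each $i=0,1,\dots,erun(\gamma)$ with weight $q^i$; you instead work directly with the one-step recursion (Theorem~\ref{t-2col} at $l=0$, i.e.\ Proposition~\ref{p-m2k}) and verify the coefficient $[erun(\alpha)+1]_q$ by induction on $k$. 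These are two sides of the same coin---unrolling versus inducting on the same recursion---so the approaches are essentially equivalent. One small remark: when you speak of ``the coefficient of $C_\alpha(1)$'' you are implicitly matching two explicit formal $\mathbb{Z}[q]$-linear combinations of the $C_\alpha$'s; since $\{C_\alpha\}$ is not a basis this phrasing could be misread, but your argument only uses it in the harmless direction (equal coefficients $\Rightarrow$ equal sums), so there is no gap.
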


One can also formulate a parking function interpretation
for $(-1)^{k} \nabla m_{2^{k}1^{l}} $.

\begin{exa}\label{exa-4.7}
\begin{align*}
m_{2,2,1}&=3e_{5}+q(C_{2}(-m_{2,1})+C_{3}(-m_{2})+C_{4}m_{1}+2C_{5}(1))\\
&=3e_{5}+q(C_{2}(2e_{3}+q(C_{2}e_{1}+C_{3}(1)))+C_{3}(e_{2}+qC_{2}(1))+C_{4}e_{1}+2C_{5}(1))\\
&=3e_{5}+2qC_{2}e_{3}+qC_{3}e_{2}+qC_{4}e_{1}+2qC_{5}(1)+q^{2}C_{2}C_{2}e_{1}+q^{2}C_{2}C_{3}(1)+q^{2}C_{3}C_{2}(1).
\end{align*}
Applying the compositional shuffle theorem to each term gives the desired parking function interpretation. Take the term $qC_{3}e_{2}$
as an instance, we have
\begin{align*}
\nabla (qC_{3}e_{2})&=q \nabla C_{3}e_{2}=q\nabla C_{3}(C_{2}(1)+C_{1}C_{1}(1))\\
&=q\nabla(C_{3,2}+C_{3,1,1})=q\nabla C_{3,2}+q\nabla C_{3,1,1}\\
&=q\cdot\sum_{\substack{P\in \mathcal{PF}_{5}\\touch(P)=(3,2)}}q^{dinv(P)}t^{area(P)}F_{ides(P)}+q\cdot\sum_{\substack{P\in \mathcal{PF}_{5}\\touch(P)=(3,1,1)}}q^{dinv(P)}t^{area(P)}F_{ides(P)}.
\end{align*}
\end{exa}

The paper is organized as follows. In Section 2, we introduce the basic definitions and some fundamental symmetric function identities that will be used for the proofs in the following sections. In section 3, we focus on a degenerate case, i.e. the case where $\mu=(2^k)$. The idea extends for the $\mu=(a^k)$ case using a similar plethystic calculation. We obtain a symmetric function identity related to the Petrie symmetric function introduced by Grinberg in \cite{grinberg2022petrie}. We further propose a conjecture, which will imply the Schur positivity of $(-1)^{(a-1)k}\nabla m_{a^{k}}$ for any positive integer $a$. In Section 4, we prove the main theorem, whose formulation was motivated by a similar result for the $\mu=(2^k)$ case.
It is worth mentioning that the proof techniques in these two sections are completely different.

\section{Definitions}
We follow the notations in Haglund's book \cite{haglund2008q}. The readers can also refer to \cite{bergeron2009algebraic} and \cite{stanley2023enumerative}.

\subsection{Basic Definitions}

\begin{dfn}
The \textbf{q-analogue} of $n\in\mathbb{N}$ is defined by
$$[0]_q =0, \text{ and }\ [n]_{q}=\frac{1-q^n}{1-q}=1+q+\cdots+q^{n-1} \text{ for } n\geq 1.$$
\end{dfn}

\begin{dfn}
A \textbf{partition} $\lambda$ is a nonincreasing finite sequence $\lambda_{1} \geq \lambda_{2} \geq \cdots$ of positive integers. The $\lambda_{i}$ is called the $i$th \textbf{part} of $\lambda$. The number of parts $l(\lambda)$ is called the \textbf{length} of $\lambda$ and the sum of the parts $|\lambda|=\sum_{i}\lambda_{i}$ is called the \textbf{size} of $\lambda$. We define $\lambda \vdash n$ if $\lambda$ is a partition of size $n$.
\end{dfn}

\begin{dfn}
A \textbf{composition} $\alpha$ of $n$, denoted $\alpha \vDash n$, is a positive integer sequence $(\alpha_{1},\alpha_{2},\cdots,\alpha_{r})$ with $|\alpha|=\alpha_{1}+\alpha_{2}+\cdots+\alpha_{r}=n$. The length of $\alpha$ is $l(\alpha)=r$.
\end{dfn}

\begin{dfn}
The \textbf{Young diagram} of $\lambda$ is an array of unit squares, called
cells, with $\lambda_{i}$ cells in the $i$th row (from the bottom), with the first cell in each row left-justified. Define the \textbf{conjugate partition}, $\lambda'$ as the
partition whose Young diagram is obtained from $\lambda$ by reflecting across the diagonal $y=x$.
\end{dfn}

Let $\Lambda$ denote the ring of symmetric functions with coefficients in $\mathbb{Q}(q,t)$.
There are five common bases of $\Lambda$.
\begin{itemize}
\item $\{m_{\lambda}\}$, monomial symmetric functions,
\item $\{e_{\lambda}\}$, elementary symmetric functions,
\item $\{h_{\lambda}\}$, complete homogeneous symmetric functions,
\item $\{p_{\lambda}\}$, power sum symmetric functions,
\item $\{s_{\lambda}\}$, Schur functions.
\end{itemize}

\begin{dfn}
Let $\omega$ denote the \textbf{involution} $\omega: \Lambda\rightarrow\Lambda$ defined by
$$\omega(p_{k})=(-1)^{k-1}p_{k}.$$
In particular, $\omega(e_{\lambda})=h_{\lambda}$, $\omega(h_{\lambda})=e_{\lambda}$ and $\omega(s_{\lambda})=s_{\lambda'}$.
\end{dfn}

\begin{dfn}
The \textbf{Hall scalar product} is a bilinear form from $\Lambda\times \Lambda\rightarrow \mathbb{Q}$, defined by
$$\left<p_{\lambda},p_{\mu}\right>=z_{\lambda}\chi(\lambda=\mu),   \qquad (\chi(true)=1, \ \chi(false)=0)$$
where $z_{\lambda}=\prod\limits_{i}i^{m_{i}}m_{i}!$ with $m_{i}$ being the number of parts of $\lambda$ equal to $i$. In particular, $$\left<h_{\lambda},m_{\mu}\right>=\chi(\lambda=\mu)\quad \text{and}\quad \left<s_{\lambda},s_{\mu}\right>=\chi(\lambda=\mu).$$
\end{dfn}

\begin{dfn}
The \textbf{(modified) Macdonald polynomial} is defined as
$$\tilde{H}_{\mu}=\tilde{H}_{\mu}[X;q,t]=\sum\limits_{\lambda\vdash n}\tilde{K}_{\lambda,\mu}(q,t)s_{\lambda},$$
where $\tilde{K}_{\lambda,\mu}(q,t)$ is the \textbf{(modified) $q,t$-Kostka polynomial}.
\end{dfn}

\begin{dfn}
Let the \textbf{arm} $a = a(x)$, \textbf{leg} $l = l(x)$, \textbf{coarm} $a' = a'(x)$, and
\textbf{coleg} $l'= l'(x)$ be the number of cells strictly between $x$ and the border of $\lambda$ in the
$E$, $N$, $W$ and $S$ directions, respectively (see Figure \ref{f-armleg}). Define
\begin{align*}
n(\mu)=\sum\limits_{x\in \mu}l'(x)=\sum\limits_{x\in \mu}l(x), \quad M=(1-q)(1-t),\\
T_{\mu}=t^{n(\mu)}q^{n(\mu')},\quad w_{\mu}=\prod\limits_{x\in \mu}(q^{a}-t^{l+1})(t^{l}-q^{a+1}).
\end{align*}
\end{dfn}

\begin{dfn}
Let $\nabla$ be the linear operator on symmetric functions which satisfies
$$\nabla \tilde{H}_{\mu}=T_{\mu}\tilde{H}_{\mu}.$$
\end{dfn}

\begin{dfn}
The \textbf{$\ast$-scalar product} is a bilinear form from $\Lambda\times \Lambda\rightarrow \mathbb{Q}$, defined by
$$\left<p_{\lambda},p_{\mu}\right>_{\ast}=(-1)^{|\mu|-l(\mu)}\prod\limits_{i}(1-t^{\mu_{i}})(1-q^{\mu_{i}})z_{\mu}\chi(\lambda=\mu).$$
In particular,
$$\left<\tilde{H}_{\lambda},\tilde{H}_{\mu}\right>_{\ast}=\chi(\lambda=\mu)w_{\mu}.$$
\end{dfn}

\begin{figure}[H]
\centering
\includegraphics[scale=0.9]{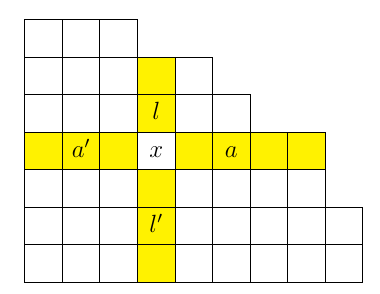}
\caption{The arm, leg, coarm and coleg of a cell.}
\label{f-armleg}
\end{figure}

\begin{dfn}
A \textbf{parking function of size $n$} is a Dyck path of size $n$ with its north steps labeled with the integers 1 to $n$ such that integers in the same column increase (see Figure \ref{parking_function}). The labels are referred to as cars. The set of all parking functions of size $n$ is denoted by $\mathcal{PF}_{n}$.
\end{dfn}

\begin{dfn}
Let $P$ be a parking function of size $n$. The \textbf{area} of $P$ is the same as the area of the underlying Dyck path $\pi(P)$. That is,
$$area(P)=area(\pi(P))=\sum\limits_{i=1}^{n}a_{i}(\pi(P)),$$
where $a_{i}(\pi(P))$ is the number of complete squares between the underlying Dyck path and the main diagonal $y=x$ in the $i$-th row.
\end{dfn}

\begin{dfn}\label{dinv_parkingfunction}
Let $P$ be a parking function of size $n$ with $v_{i}(P)=v_{i}$ the car in the $i$-th row. Define
$$dinv(P)=\sum\limits_{1\leq i<j\leq n}\chi(a_{i}=a_{j} \text{ and } v_{i}<v_{j})+\sum\limits_{1\leq i<j\leq n}\chi(a_{i}=a_{j}+1 \text{ and } v_{i}>v_{j}).$$
\end{dfn}

\begin{dfn}
Let $P$ be a parking function of size $n$, define
$$touch(P)=\alpha=(\alpha_{1},\alpha_{2},\cdots,\alpha_{l(\alpha)})\vDash n,\qquad ret(P)=\alpha_1,$$
if $\pi(P)$ touches the main diagonal $y=x$ in the $(\alpha_1+\cdots+\alpha_i)$-th row for each $i$.
\end{dfn}

\begin{dfn}
For $S\subseteq [n-1]=\{1,\dots, n-1\}$, define \textbf{Gessel's fundamental quasisymmetric function} by
$$F_{n,S}=\sum\limits_{\substack{i_{1}\leq i_{2}\leq \cdots\leq i_{n}\\j\in S\Rightarrow i_{j}<i_{j+1}}}x_{i_{1}}x_{i_{2}}\cdots x_{i_{n}}.$$
When the degree is clear from the context, we will write $F_{S}$ for $F_{n,S}$.
\end{dfn}

\begin{dfn}
Let $P$ be a parking function of size $n$. The \textbf{reading word} $\sigma(P)$ of $P$ is the permutation obtained by reading cars
 along the diagonals, from top to bottom, and from right to left within each diagonal. That is, read the values of $v_{i}(P)$ from large to small values of $a_{i}(P)$ and, within ties, from large to small values of $i$. The \textbf{inverse descent set} of $P$ is
$$ides(P)=Des(\sigma(P)^{-1}).$$
\end{dfn}

\begin{exa}For the parking function depicted in Figure \ref{parking_function}, we have
$area(P)=5$, $touch(P)=(2,4)$, $ret(P)=2$, $$dinv(P)=|\{(1,3)\}|+|\{(2,3),(5,6)\}|=3,$$ $\sigma(P)=524631$, $ides(P)=Des((524631)^{-1})=Des(625314)=\{1,3,4\}$.
\end{exa}

\begin{figure}[htb]
    \centering
    \includegraphics[scale=0.8]{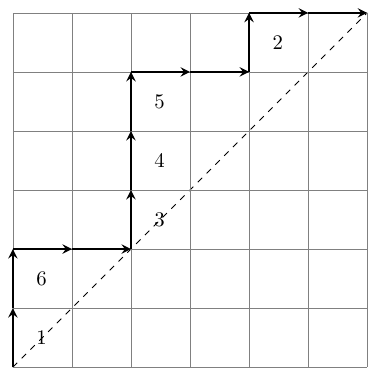}
    \caption{A parking function of size 6.}
    \label{parking_function}
\end{figure}

\subsection{Some Basic Symmetric Functions Identities}

\begin{dfn}
Let $E(t_{1},t_{2},t_{3},\cdots)$ be a formal series of rational functions in the parameters $t_{1},t_{2},t_{3},\cdots$ We define the \textbf{plethystic substitution} of $E$ into $p_{k}$, denoted $p_{k}[E]$, by
$$p_{k}[E]=E(t_{1}^{k},t_{2}^{k},\cdots).$$
\end{dfn}

Readers unfamiliar with plethystic notation can refer to \cite{loehr2011computational} and \cite{zabrocki2015introduction}.
Here we list some identities that will be used frequently in following sections.
\begin{prop}[The addition formulas]
\begin{align*}
e_{n}[X+Y]&=\sum\limits_{i=0}^{n}e_{i}[X]e_{n-i}[Y],\quad e_{n}[X-Y]=\sum\limits_{i=0}^{n}e_{i}[X]e_{n-i}[-Y],\\
h_{n}[X+Y]&=\sum\limits_{i=0}^{n}h_{i}[X]h_{n-i}[Y],\quad h_{n}[X-Y]=\sum\limits_{i=0}^{n}h_{i}[X]h_{n-i}[-Y].
\end{align*}
\end{prop}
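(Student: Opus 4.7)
The plan is to reduce all four identities to the multiplicativity of the generating series $E(z)=\sum_{n\ge 0}e_n z^n$ and $H(z)=\sum_{n\ge 0}h_n z^n$, and then exploit the fact that plethystic substitution distributes over sums of alphabets through the rule $p_k[X+Y]=p_k[X]+p_k[Y]$ (with the convention $p_k[-Y]=-p_k[Y]$).

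First I would recall the classical generating-function identities
\begin{align*}
\log E(X;z) &= \sum_{k\ge 1}\frac{(-1)^{k-1}}{k}\,p_k[X]\,z^k,\\
\log H(X;z) &= \sum_{k\ge 1}\frac{1}{k}\,p_k[X]\,z^k,
\end{align*}
which follow by taking $\log\prod_i(1+x_iz)$ and $\log\prod_i(1-x_iz)^{-1}$ and expanding termwise. These identities express $e_n$ and $h_n$ as universal polynomials in the power sums $p_k$, so the same formulas remain valid after plethystic substitution into any alphabet (since plethysm is defined on the $p_k$ basis). I would then apply $p_k[X\pm Y]=p_k[X]\pm p_k[Y]$ to split the logarithms:
\begin{align*}
\log E(X+Y;z)&=\log E(X;z)+\log E(Y;z),\\
\log E(X-Y;z)&=\log E(X;z)+\log E(-Y;z),
\end{align*}
and similarly for $H$. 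Exponentiating gives the multiplicative identities $E(X+Y;z)=E(X;z)E(Y;z)$, $E(X-Y;z)=E(X;z)E(-Y;z)$, $H(X+Y;z)=H(X;z)H(Y;z)$, $H(X-Y;z)=H(X;z)H(-Y;z)$.

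Finally I would extract the coefficient of $z^n$ on both sides of each identity. The Cauchy product of the two series gives exactly $\sum_{i=0}^{n}e_i[X]e_{n-i}[Y]$ on the right of the first identity, and analogously for the other three. There is essentially no obstacle in the proof; the only delicate point worth emphasizing is the sign convention $p_k[-Y]=-p_k[Y]$, which is what makes the $X-Y$ case formally reduce to the $X+Y$ case with $Y$ replaced by $-Y$ rather than producing spurious extra signs when specializing the generating series.
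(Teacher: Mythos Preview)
Your argument is correct: reducing to the multiplicativity of the generating series via the power-sum logarithm is the standard route, and your handling of the sign convention $p_k[-Y]=-p_k[Y]$ is exactly what is needed for the $X-Y$ cases.

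There is nothing to compare against here: the paper does not prove this proposition at all. It is listed in Section~2.2 among the basic plethystic identities quoted from the standard references (Haglund's book, Loehr--Remmel, Zabrocki) and used without proof. Your write-up would serve perfectly well as a self-contained justification if one were wanted.
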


\begin{prop}
For any $f\in \Lambda$,
$$f[-\epsilon X]=\omega f[X],$$
where $\epsilon=-1$, but is understood as a monomial whenever it is in the bracket.
\end{prop}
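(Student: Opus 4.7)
The plan is to reduce the identity to a statement about power sums, where both sides can be computed directly. First I would observe that both $f \mapsto f[-\epsilon X]$ and $f \mapsto \omega(f)$ are linear in $f$ and multiplicative: plethystic substitution satisfies $(gh)[Y] = g[Y]\, h[Y]$ by definition (after expanding $g$ and $h$ in the power sum basis), and $\omega$ is an algebra homomorphism on $\Lambda$ since it is defined by its values on the algebra generators $\{p_n\}$. Because the $\{p_n : n \geq 1\}$ generate $\Lambda$ as a $\mathbb{Q}(q,t)$-algebra, it therefore suffices to verify the identity on each single $p_n$.

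Next I would invoke the two definitional rules for plethysm that govern the symbols appearing in $-\epsilon X$. The minus sign is treated as a genuine additive inverse, giving $p_n[-Y] = -p_n[Y]$, while $\epsilon$ inside a plethystic bracket behaves as a monomial, so that $p_n[\epsilon X] = \epsilon^n p_n[X]$. Combining these yields $p_n[-\epsilon X] = -\epsilon^n\, p_n[X]$, and specializing $\epsilon = -1$ outside the bracket collapses the prefactor to $-(-1)^n = (-1)^{n-1}$. This matches $\omega(p_n)[X] = (-1)^{n-1} p_n[X]$ on the nose, which establishes the identity on generators and hence, by the multiplicative and linear extension argued above, on all of $\Lambda$.

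The only real subtlety here is notational rather than mathematical: one must be scrupulous about the dual role of $\epsilon$, which behaves as a monomial inside plethystic brackets but evaluates to $-1$ outside, and about the asymmetric treatment of the additive minus sign versus the monomial $\epsilon$ under plethysm. These are precisely the conventions that make the stated identity clean, and once they are respected the verification is essentially a one-line computation on each $p_n$. I do not anticipate any serious obstacle beyond this careful bookkeeping.
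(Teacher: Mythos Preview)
Your proposal is correct. The paper, however, does not actually prove this proposition: it is stated in Section~2.2 among a list of basic plethystic identities and left without proof, being treated as standard background (with references to \cite{loehr2011computational} and \cite{zabrocki2015introduction} for plethystic notation). Your argument---reducing via multiplicativity of both operations to the generators $p_n$, then computing $p_n[-\epsilon X]=-\epsilon^n p_n[X]=(-1)^{n-1}p_n[X]=\omega(p_n)$---is exactly the standard verification one finds in those references, so there is nothing further to compare.
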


\begin{prop}
Define $\Omega=\sum\limits_{n\geqslant0}h_{n}$, then
\begin{align*}
\Omega[zX]=\sum\limits_{n\geq 0}h_{n}[X]z^{n},\quad
\Omega[X+Y]=\Omega[X]\Omega[Y],\quad \Omega[-X]=\Omega[X]^{-1}.
\end{align*}
\end{prop}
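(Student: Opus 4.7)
The plan is to deduce all three identities directly from the definition $\Omega=\sum_{n\geq 0}h_n$, together with the addition formula $h_n[X+Y]=\sum_{i=0}^{n}h_i[X]h_{n-i}[Y]$ stated in the preceding proposition, and the basic plethystic rule $p_k[zX]=z^k p_k[X]$ that follows from the definition of plethystic substitution.

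For the first identity $\Omega[zX]=\sum_{n\geq 0}h_n[X]z^n$, I would note that $h_n$ is a polynomial in the $p_k$ which is homogeneous of weighted degree $n$ (with $\deg p_k=k$). Combined with $p_k[zX]=z^k p_k[X]$, this forces $h_n[zX]=z^n h_n[X]$; summing over $n$ gives the claim.

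For the second identity, I would sum the addition formula over all $n\geq 0$:
\begin{equation*}
\Omega[X+Y]=\sum_{n\geq 0}h_n[X+Y]=\sum_{n\geq 0}\sum_{i=0}^{n}h_i[X]h_{n-i}[Y]=\Bigl(\sum_{i\geq 0}h_i[X]\Bigr)\Bigl(\sum_{j\geq 0}h_j[Y]\Bigr)=\Omega[X]\Omega[Y],
\end{equation*}
where the reindexing of the Cauchy product is legitimate in the graded completion of $\Lambda$.

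For the third identity, I would first check $\Omega[0]=1$: this amounts to $h_0=1$ and $h_n[0]=0$ for $n\geq 1$, both of which are immediate since substituting the empty alphabet kills every positive-degree monomial. Applying the second identity with $Y=-X$ then yields $\Omega[X]\,\Omega[-X]=\Omega[X-X]=\Omega[0]=1$, i.e.\ $\Omega[-X]=\Omega[X]^{-1}$. There is no real obstacle here; the only mild technical point is that all sums should be interpreted in the inverse limit topology on $\Lambda$ graded by total degree, so that formal multiplication and substitution are well-defined term by term, which is standard and already implicit in the paper's use of plethystic notation.
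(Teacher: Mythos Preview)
Your argument is correct; each of the three identities is handled cleanly, and the only subtlety you flag (working in the graded completion) is exactly the right caveat. Note, however, that the paper states this proposition without proof---it is listed among the standard plethystic identities and no argument is given. So there is nothing to compare against: your write-up simply supplies the routine verification that the paper omits.
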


We also need the next simple result.
\begin{prop}
If $u$ and $v$ are monomials, then we have
\begin{equation}\label{p-hn}
h_{n}[(1-u)v]=\begin{cases}
1, & \text{ if } n=0,\\
(1-u)v^{n}, & \text{ if } n\geq 1.
\end{cases}
\end{equation}
\end{prop}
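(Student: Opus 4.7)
The plan is to use the generating function identity $\Omega[zX]=\sum_{n\geq 0}h_n[X]z^n$ stated in the previous proposition. Specializing to $X=(1-u)v$ and applying the multiplicative property $\Omega[A+B]=\Omega[A]\Omega[B]$, I would write
\[
\Omega[z(1-u)v]=\Omega[zv-zuv]=\Omega[zv]\cdot\Omega[-zuv].
\]
Because $u$ and $v$ are monomials, each of $zv$ and $zuv$ is itself a single monomial in $z$ and the base variables; for any single monomial $y$ one has $p_k[y]=y^k$, whence $\Omega[y]=\exp\!\bigl(\sum_{k\geq 1}y^k/k\bigr)=1/(1-y)$, and by the rule $\Omega[-X]=\Omega[X]^{-1}$, $\Omega[-y]=1-y$. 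This gives the closed form
\[
\Omega[z(1-u)v]=\frac{1-zuv}{1-zv}.
\]

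Next I would extract the coefficient of $z^n$. Expanding the geometric series,
\[
\frac{1-zuv}{1-zv}=(1-zuv)\sum_{n\geq 0}(zv)^n=1+\sum_{n\geq 1}\bigl(v^n-u v^n\bigr)z^n=1+\sum_{n\geq 1}(1-u)v^n\, z^n.
\]
Comparing with $\sum_{n\geq 0}h_n[(1-u)v]\,z^n$ yields $h_0[(1-u)v]=1$ and $h_n[(1-u)v]=(1-u)v^n$ for $n\geq 1$, which is exactly the claimed formula.

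There is no real obstacle in this argument; the only care needed is in the very first reduction, where one must justify that the plethystic substitution $p_k[(1-u)v]$ genuinely splits as $v^k-u^kv^k$. This is immediate from the defining property $p_k[A\pm B]=p_k[A]\pm p_k[B]$ for plethystic substitution together with $p_k[\text{monomial } y]=y^k$, but it is worth stating explicitly before invoking $\Omega[A+B]=\Omega[A]\Omega[B]$. Once that small sanity check is made, the rest is a one-line geometric-series expansion.
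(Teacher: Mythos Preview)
Your proof is correct. The paper actually states this proposition without proof, calling it a ``simple result,'' so there is no argument to compare against; your generating-function approach via $\Omega[z(1-u)v]=(1-zuv)/(1-zv)$ is the natural one and is entirely sound.
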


\section{The $\mu=(2^{k})$ case}
Our objective in this section is to establish the following result.
\begin{thm}\label{t-m2k}
For any positive integer $k$,
\begin{equation}\label{expansion}
(-1)^{k}m_{2^k}=e_{2k}+\sum\limits_{i=1}^{k}\sum\limits_{\substack{\left|\alpha\right|\leq k\\l(\alpha)=i}}q^{i}C_{2\alpha_{1}}\cdots C_{2\alpha_{i}}(e_{2k-2\left|\alpha\right|}).
\end{equation}
\end{thm}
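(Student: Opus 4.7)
The plan is to prove an equivalent operator identity via generating functions. Set
\[
\Phi(z)=\sum_{k\ge 0}(-1)^k z^k m_{2^k},\quad E(z)=\sum_{k\ge 0} z^k e_{2k},\quad H(z)=\sum_{k\ge 0} z^k h_{2k},
\]
and introduce the operator-valued power series $B(z)=\sum_{a\ge 1} z^a C_{2a}$ acting on $\Lambda[[z]]$. The key observation is that $\Phi(z)=\prod_i(1-zx_i^2)=\Omega[-zX^2]$ in plethystic notation. Writing $R_k$ for the right-hand side of \eqref{expansion} and regrouping by the value of $|\alpha|$ gives
\[
\sum_{k\ge 0} z^k R_k=\sum_{i\ge 0}q^i B(z)^i(E(z))=(1-qB(z))^{-1}E(z),
\]
so Theorem \ref{t-m2k} is equivalent to the operator identity $(1-qB(z))\,\Omega[-zX^2]=E(z)$.

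Next I would prove the classical factorization $E(z)=\Omega[-zX^2]\cdot H(z)$. This follows from $\prod_i(1-w^2x_i^2)=\prod_i(1+wx_i)(1-wx_i)$: multiplying $\Omega[-zX^2]$ at $z=w^2$ by the even-in-$w$ part of $\Omega[wX]$ reduces to $\tfrac{1}{2}\bigl(\prod_i(1+wx_i)+\prod_i(1-wx_i)\bigr)$, whose $w^{2k}$-coefficient is $e_{2k}$.

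The heart of the argument is to prove $B(z)\,\Omega[-zX^2]=-\tfrac{1}{q}\,\Omega[-zX^2]\,(H(z)-1)$. Using the standard plethystic convention that $(1-1/q)/w$ is a formal difference, so that $p_k[(1-1/q)/w]=(1-1/q^k)/w^k$, I compute
\[
\Omega[-zX^2]\big|_{X\to X-(1-1/q)/w}=\Omega[-zX^2]\cdot\Omega\!\left[\frac{(1-1/q^2)z}{w^2}\right]=\Omega[-zX^2]\cdot\frac{1-z/(q^2 w^2)}{1-z/w^2}.
\]
Applying the definition of $C_{2a}$, multiplying by $\Omega[wX]$, and extracting $[w^{2a}]$ gives $h_{2a}+(1-1/q^2)\sum_{n\ge 1} z^n h_{2a+2n}$. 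Summing with weight $z^a q^{1-2a}$ over $a\ge 1$ and reindexing $m=a+n$, the inner geometric sum simplifies via $(1-1/q^2)\sum_{a=1}^{m-1}q^{-2a}=1/q^2-1/q^{2m}$, so the coefficient of $z^m h_{2m}$ collapses uniformly to $1/q^2$ for every $m\ge 1$, producing $\tfrac{1}{q^2}(H(z)-1)$.

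Combining these steps, $(1-qB(z))\,\Omega[-zX^2]=\Omega[-zX^2]+\Omega[-zX^2](H(z)-1)=\Omega[-zX^2]\cdot H(z)=E(z)$, which proves Theorem \ref{t-m2k} upon extracting the coefficient of $z^k$. I expect the main obstacle to be the plethystic manipulation in the third step: obtaining the factor $1-1/q^2$ (rather than $(1-1/q)^2$) requires the formal-difference convention for $(1-1/q)/w$, and the collapse of the double sum to $(H(z)-1)/q^2$ hinges on a precise cancellation between this factor and the geometric series.
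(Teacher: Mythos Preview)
Your approach is correct and is essentially the same as the paper's: the operator identity $(1-qB(z))\,\Phi(z)=E(z)$ is precisely the generating-function form of the paper's recursion (Proposition~\ref{p-m2k}), your plethystic computation of $B(z)\Phi(z)$ mirrors the paper's calculation of $C_{2i}m_{2^{k-i}}$, and your factorization $E(z)=\Phi(z)H(z)$ is exactly the identity $e_{2k}=\sum_{i}(-1)^{i}m_{2^{i}}h_{2k-2i}$ the paper invokes at the end. One arithmetic slip: summing with weight $z^{a}q^{1-2a}$ gives $\tfrac{1}{q}(H(z)-1)$, not $\tfrac{1}{q^{2}}(H(z)-1)$ (the coefficient of $z^{m}h_{2m}$ is $q^{1-2m}+q\bigl(q^{-2}-q^{-2m}\bigr)=1/q$), and the overall minus sign then comes from $(-1/q)^{2a-1}=-q^{1-2a}$; this does not affect your final identity.
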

By the fact $e_n=\sum_{\alpha \vDash n} C_\alpha$, we obtain a positive $C$-expansion of $(-1)^{k}m_{2^k}$. Thus we have
\begin{cor}
For any positive integer $k$,
$$\left<(-1)^{k}\nabla m_{2^{k}}, s_{\lambda}\right>\in \mathbb{N}[q,t],\quad \forall\ \lambda\vdash 2k.$$
That is, $(-1)^{k}\nabla m_{2^k}$ is Schur positive.
\end{cor}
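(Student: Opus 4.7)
The plan is to deduce this Schur positivity statement as a short consequence of Theorem~\ref{t-m2k} combined with three ingredients already invoked in the introduction: (i) the Haglund--Morse--Zabrocki identity $e_n = \sum_{\alpha \vDash n} C_\alpha$, (ii) the compositional shuffle theorem of Carlsson--Mellit, and (iii) the fact (via Remark 6.5 in Haglund's book plus Grojnowski--Haiman) that for each composition $\gamma$ the parking function sum $\sum_{touch(P)=\gamma} q^{dinv(P)} t^{area(P)} F_{ides(P)}$ is a nonnegative combination of LLT polynomials and hence Schur positive.

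First I would convert the right-hand side of Theorem~\ref{t-m2k} into a manifestly positive $C$-expansion. Substituting $e_{2k-2|\alpha|} = \sum_{\beta \vDash 2k-2|\alpha|} C_\beta$ into each summand of (\ref{expansion}), and concatenating the composition $(2\alpha_1,\ldots,2\alpha_i)$ with $\beta$, one obtains
$$(-1)^k m_{2^k} = \sum_{\gamma \vDash 2k} c_\gamma(q)\, C_\gamma,$$
where $c_\gamma(q)$ counts (with multiplicity) the pairs $(\alpha,\beta)$ producing $\gamma$ weighted by the factor $q^{l(\alpha)}$. In particular $c_\gamma(q) \in \mathbb{N}[q]$ for every $\gamma$, and $c_\gamma(q) \geq 1$ when $\gamma$ has no initial block of even parts (coming from the $e_{2k}$ term, which corresponds to $\alpha = \emptyset$).

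Next I apply $\nabla$ term by term. By linearity and the compositional shuffle theorem,
$$(-1)^k \nabla m_{2^k} = \sum_{\gamma \vDash 2k} c_\gamma(q)\, \sum_{\substack{P \in \mathcal{PF}_{2k}\\ touch(P)=\gamma}} q^{dinv(P)} t^{area(P)} F_{ides(P)}.$$
Pairing with $s_\lambda$ and using the LLT reformulation of the inner sum (Remark 6.5 of \cite{haglund2008q}) together with the Schur positivity of LLT polynomials, each inner coefficient $\langle \nabla C_\gamma, s_\lambda\rangle$ lies in $\mathbb{N}[q,t]$. Multiplying by the nonnegative polynomial $c_\gamma(q) \in \mathbb{N}[q]$ and summing over $\gamma$ preserves membership in $\mathbb{N}[q,t]$, yielding $\langle (-1)^k \nabla m_{2^k}, s_\lambda\rangle \in \mathbb{N}[q,t]$, which is the desired conclusion.

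There is essentially no obstacle here beyond checking, at the level of bookkeeping, that the nested substitution $e_{2k-2|\alpha|} \mapsto \sum_\beta C_\beta$ really does preserve nonnegativity of the coefficients: all that needs to be observed is that the only extrinsic factor introduced in (\ref{expansion}) is the scalar $q^i$, which is already in $\mathbb{N}[q]$. The genuine work has been absorbed into Theorem~\ref{t-m2k}, so the corollary is a formal consequence, and the parking function formula sketched in Example~\ref{exa-4.7} is obtained along the way at no extra cost.
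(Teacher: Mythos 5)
Your proposal is correct and follows essentially the same route as the paper: Theorem~\ref{t-m2k} plus the substitution $e_n=\sum_{\alpha\vDash n}C_\alpha$ yields a $C$-expansion of $(-1)^k m_{2^k}$ with coefficients in $\mathbb{N}[q]$ (the paper's next corollary even identifies them as $[erun(\gamma)+1]_q$), and Schur positivity then follows from the compositional shuffle theorem together with LLT positivity applied to each $\nabla C_\gamma$. The only quibble is your parenthetical about when $c_\gamma(q)\geq 1$ --- the $e_{2k}$ term contributes $1$ to \emph{every} $c_\gamma$ --- but this is immaterial to the argument.
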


Besides, we obtain a parking function interpretation for $(-1)^{k}\nabla m_{2^{k}}$ by applying $\nabla$ to both sides of (\ref{expansion}).
\begin{cor}
For any positive integer $k$,
$$(-1)^{k}\nabla m_{2^k}=\sum_{\substack{P\in \mathcal{PF}_{2k}\\touch(P)=\alpha}} [erun(\alpha)+1]_q q^{dinv(P)}t^{area(P)}F_{ides(P)},$$
where $erun=erun(\alpha)\leq l(\alpha)$ denotes the length of the first run of even parts in $\alpha$, i.e.,
$\alpha_i$ are even for all $i\leq erun$ and either $l(\alpha)=erun$ or $\alpha_{erun+1}$ is odd.

\end{cor}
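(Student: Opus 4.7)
The plan is to deduce this corollary directly from Theorem~\ref{t-m2k} by applying $\nabla$ to both sides of \eqref{expansion}, unfolding the inner $e_n$'s into $C$-operators via the Haglund--Morse--Zabrocki identity $e_n=\sum_{\beta\vDash n}C_\beta$, and then invoking the compositional shuffle theorem.

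First I would rewrite each summand on the right-hand side of \eqref{expansion} as a nonnegative combination of $C_\gamma(1)$'s for compositions $\gamma\vDash 2k$. Using $e_{2k-2|\alpha|}=\sum_{\beta\vDash 2k-2|\alpha|}C_\beta(1)$, the term indexed by $(\alpha_1,\dots,\alpha_i)$ becomes
\begin{equation*}
q^{i}\sum_{\beta\,\vDash\, 2k-2|\alpha|}C_{2\alpha_1,\,\dots,\,2\alpha_i,\,\beta_1,\,\dots,\,\beta_{l(\beta)}}(1).
\end{equation*}
Thus the full expansion of $(-1)^{k}m_{2^k}$ takes the form $\sum_{\gamma\vDash 2k}c_\gamma(q)\,C_\gamma(1)$, and the key combinatorial step is to identify $c_\gamma(q)$.

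Next I would compute $c_\gamma(q)$ by reversing the process: a composition $\gamma=(\gamma_1,\dots,\gamma_{l(\gamma)})\vDash 2k$ arises from the $i$-th layer precisely when the first $i$ parts $\gamma_1,\dots,\gamma_i$ are all even (so that we may set $\alpha_j=\gamma_j/2$), in which case $\beta=(\gamma_{i+1},\dots,\gamma_{l(\gamma)})$ is forced, and the side condition $|\alpha|\le k$ is automatic because $|\gamma|=2k$. The index $i=0$ corresponds to the leading term $e_{2k}$ and contributes $1$. Consequently $i$ ranges exactly over $\{0,1,\dots,erun(\gamma)\}$ and each value contributes $q^i$, giving
\begin{equation*}
c_\gamma(q)=\sum_{i=0}^{erun(\gamma)}q^{i}=[erun(\gamma)+1]_q.
\end{equation*}

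Finally I would apply $\nabla$ termwise and substitute the compositional shuffle theorem \eqref{Compositional shuffle theorem} for each $\nabla C_\gamma$, then interchange the sums so that the outer index becomes a parking function $P\in\mathcal{PF}_{2k}$ with $\gamma=touch(P)$; this yields the claimed identity. I do not anticipate a serious obstacle here: Theorem~\ref{t-m2k} does all the heavy lifting, and the only real content is the bookkeeping lemma showing $c_\gamma(q)=[erun(\gamma)+1]_q$, which is essentially a rereading of the summation indices. The one point that deserves care is verifying that the $i=0$ (i.e.\ $e_{2k}$) contribution is correctly aligned with the $q^{0}=1$ term of $[erun(\gamma)+1]_q$ for every $\gamma$, including the extreme case $erun(\gamma)=l(\gamma)$ where $\beta$ is empty.
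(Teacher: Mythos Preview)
Your proposal is correct and follows essentially the same approach as the paper: expand each $e_{2k-2|\alpha|}$ via $e_n=\sum_{\beta\vDash n}C_\beta$, collect the coefficient of $C_\gamma$ by noting that $\gamma$ comes from the $i$-th layer exactly when its first $i$ parts are even (with $i=0$ giving the $e_{2k}$ term), deduce that the coefficient is $[erun(\gamma)+1]_q$, and then apply the compositional shuffle theorem. The paper's only cosmetic difference is to absorb the $e_{2k}$ term into the sum by allowing $i=0$ (treating $C_0$ as the identity), but the bookkeeping and conclusion are identical.
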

\begin{proof}
By treating $C_0$ as the identity, we can rewrite \eqref{expansion} as follows:
\begin{align*}
 (-1)^{k}m_{2^k}&=\sum\limits_{i=0}^{k}\sum\limits_{\substack{\left|\alpha\right|\leqslant k\\l(\alpha)=i}}q^{i}C_{2\alpha_{1}}\cdots C_{2\alpha_{i}}(e_{2k-2\left|\alpha\right|})\\
 &=\sum\limits_{i=0}^{k}\sum\limits_{\substack{\left|\alpha\right|\leqslant k\\l(\alpha)=i}}q^{i}C_{2\alpha_{1}}\cdots C_{2\alpha_{i}}\sum_{\beta \vDash 2k-2\left|\alpha\right|} C_\beta.
\end{align*}
We now collect terms with respect to $C$.
For each $\gamma=(\gamma_1,\dots, \gamma_l)$, the term $C_\gamma$ arises from the $i$-th summand, where $i=0,1,\dots, erun=erun(\gamma)$, with respect to the compositions $2\alpha=(\gamma_1,\dots, \gamma_i)$ and $\beta=(\gamma_{i+1},\dots, \gamma_l)$.
Therefore, we obtain
$$(-1)^{k}m_{2^k}=\sum_{\alpha \vDash 2k}\left(1+q+\cdots +q^{erun(\alpha)}\right)C_{\alpha}.$$
The corollary is then established by applying the compositional shuffle theorem.
\end{proof}

The formula in \eqref{expansion} is new. We guessed it through investigating some computation data.
The difficulty lies in the fact that $\{C_\alpha\}$ is not a basis so that the $\{C_\alpha\}$ expansion of $(-1)^{k}m_{2^k}$ is not unique.
Once Theorem \ref{t-m2k} is formulated, we are able to transform it into its equivalent form, namely
Proposition \ref{p-m2k} below. On one hand, the theorem follows by iterative application of Proposition \ref{p-m2k}. On
the other hand, we explain as follows.
\begin{proof}[Reduction of Theorem \ref{t-m2k} to Proposition \ref{p-m2k}]
The proof is by induction on $k$.
By grouping the value of $\alpha_{1}$ in the right hand side of \eqref{expansion}, we have
\begin{align*}
RHS
&= e_{2k}+\sum\limits_{i=1}^{k}\sum\limits_{\substack{\left|\alpha\right|\leqslant k\\l(\alpha)=i}}q^{i}C_{2\alpha_{1}}C_{2\alpha_{2}}\cdots C_{2\alpha_{i}}(e_{2k-2\left|\alpha\right|})\\
&=e_{2k}+\sum\limits_{i=1}^{k}\sum\limits_{j=1}^{k}\sum\limits_{\substack{\alpha_{1}=j\\\left|\alpha\right|\leqslant k\\l(\alpha)=i}}q^{i}C_{2j}C_{2\alpha_{2}}\cdots C_{2\alpha_{i}}(e_{2k-2\left|\alpha\right|})\\
&=e_{2k}+\sum\limits_{i=1}^{k}\sum\limits_{j=1}^{k}\sum\limits_{\substack{\left|\beta\right|\leqslant k-j\\l(\beta)=i-1}}q^{i}C_{2j}C_{2\beta_{1}}\cdots C_{2\beta_{i-1}}(e_{2(k-j)-2\left|\beta\right|})\\
(\text{exchange $i$ and $j$})&=e_{2k}+q\sum\limits_{i=1}^{k}C_{2i}\sum\limits_{j=1}^{k}\sum\limits_{\substack{\left|\beta\right|\leqslant k-i\\l(\beta)=j-1}}q^{j-1}C_{2\beta_{1}}\cdots C_{2\beta_{j-1}}(e_{2(k-i)-2\left|\beta\right|})\\
&=e_{2k}+q\sum\limits_{i=1}^{k}C_{2i}\sum\limits_{j=0}^{k-1}\sum\limits_{\substack{\left|\beta\right|\leqslant k-i\\l(\beta)=j}}q^{j}C_{2\beta_{1}}\cdots C_{2\beta_{j}}(e_{2(k-i)-2\left|\beta\right|})\\
&=e_{2k}+q\sum\limits_{i=1}^{k}C_{2i}\left(e_{2(k-i)}+\sum\limits_{j=1}^{k-1}\sum\limits_{\substack{\left|\beta\right|\leqslant k-i\\l(\beta)=j}}q^{j}C_{2\beta_{1}}\cdots C_{2\beta_{j}}(e_{2(k-i)-2\left|\beta\right|})\right)\\
&=e_{2k}+q\sum\limits_{i=1}^{k}C_{2i}\left(e_{2(k-i)}+\sum\limits_{j=1}^{k-i}\sum\limits_{\substack{\left|\beta\right|\leqslant k-i\\l(\beta)=j}}q^{j}C_{2\beta_{1}}\cdots C_{2\beta_{j}}(e_{2(k-i)-2\left|\beta\right|})\right)\\
&=e_{2k}+q\sum\limits_{i=1}^{k}C_{2i}((-1)^{k-i}m_{2^{k-i}}),
\end{align*}
where in the last step, we use \eqref{expansion} with respect to $k-i$ by the induction hypothesis. Thus it suffices to prove Proposition \ref{p-m2k}.
\end{proof}

\begin{prop}\label{p-m2k}
For any positive integer $k$, we have
\begin{equation}\label{recursion}
(-1)^{k}m_{2^k}=e_{2k}+q\sum\limits_{i=1}^{k}C_{2i}((-1)^{k-i}m_{2^{k-i}}).
\end{equation}
\end{prop}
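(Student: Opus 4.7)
My plan is to unfold the right-hand side via the plethystic definition of $C_{2i}$ and reduce the identity to a classical symmetric-function identity. Set $u = 1 - 1/q$. Using that the alphabet $(1-1/q)/\xi$ has power sums $p_n = (1-q^{-n})/\xi^n$, I get $p_2[X - u/\xi] = p_2[X] - (1 - q^{-2})/\xi^2$. Combining this with the addition formula for $e_n$, the identity $e_j[-Y] = (-1)^j h_j[Y]$, and equation \eqref{p-hn} (which yields $h_j[(1-q^{-2})/\xi^2] = (1-q^{-2})/\xi^{2j}$ for $j \geq 1$), and using $m_{2^r} = e_r[p_2[X]]$, I obtain
\[
(-1)^{k-i} m_{2^{k-i}}[X - u/\xi] = (-1)^{k-i} e_{k-i}[p_2[X]] + (-1)^{k-i}(1 - q^{-2}) \sum_{j=1}^{k-i} (-1)^j \frac{e_{k-i-j}[p_2[X]]}{\xi^{2j}}.
\]
Multiplying by $\Omega[\xi X] = \sum_n \xi^n h_n[X]$, extracting the coefficient of $\xi^{2i}$, and applying the prefactor $(-1/q)^{2i-1} = -q^{1-2i}$ then gives a closed form for $qC_{2i}((-1)^{k-i} m_{2^{k-i}})$ as a linear combination of products $e_r[p_2[X]]\, h_s[X]$.

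Next I sum over $i = 1, \ldots, k$ and simplify. The contribution of the first (single-sum) term is immediate after reindexing by $r = k - i$; for the double sum, I reindex by $m = i + j$ with $r = k - m$ and swap the order of summation, so that the inner sum becomes the geometric series $\sum_{i=1}^{m-1} q^{2-2i} = (1 - q^{-2(m-1)})/(1 - q^{-2})$. Its denominator cancels the outer factor $1 - q^{-2}$, and the $q$-dependent part $-q^{-2(m-1)}$ of the numerator telescopes against the single-sum piece for $r = 0, \ldots, k-2$, leaving only the boundary term at $r = k - 1$. After combining everything, the total collapses to
\[
q \sum_{i=1}^{k} C_{2i}\bigl((-1)^{k-i} m_{2^{k-i}}\bigr) = \sum_{r=0}^{k-1} (-1)^{r+1}\, e_r[p_2[X]]\, h_{2k-2r}[X].
\]

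The proposition is thus equivalent to the identity
\[
\sum_{r=0}^{k} (-1)^r\, e_r[p_2[X]]\, h_{2k-2r}[X] = e_{2k}[X],
\]
which is the coefficient of $z^{2k}$ in the generating-function identity
\[
\Omega[-z^2 p_2[X]] \cdot \Omega[zX] = \prod_i (1 - z^2 x_i^2) \prod_i \frac{1}{1 - zx_i} = \prod_i (1 + zx_i) = \sum_n z^n e_n[X],
\]
via the factorization $1 - z^2 x_i^2 = (1-zx_i)(1+zx_i)$. The main obstacle is the bookkeeping in the second step: one must track signs, powers of $q$, and the reindexed summation bounds carefully enough so that the factor $1 - q^{-2}$ cancels cleanly and the $q$-dependent pieces telescope. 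The endpoint (the Euler-type identity for $e_{2k}$) is elementary, but the plethystic route from the definition of $C_{2i}$ to it requires patience.
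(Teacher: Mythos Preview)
Your proposal is correct and follows essentially the same route as the paper: expand $C_{2i}m_{2^{k-i}}$ plethystically via $m_{2^r}=e_r[p_2]$ and the addition formula, sum over $i$ with a telescoping/geometric-series simplification, and reduce everything to the identity $e_{2k}=\sum_{r=0}^{k}(-1)^r m_{2^r}h_{2k-2r}$. The only difference is cosmetic: the paper invokes this last identity as the $k=2$ case of Grinberg's Petrie-function lemma, whereas you supply a short self-contained generating-function proof via $\prod_i(1-z^2x_i^2)\prod_i(1-zx_i)^{-1}=\prod_i(1+zx_i)$.
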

\begin{proof}
Recall that $m_{2^j}=e_j[p_2[X]]=p_2[e_j[X]]$. 
By definition of the $C$ operator, we have
\begin{align*}
C_{2i}m_{2^{k-i}} &= C_{2i}e_{k-i}\left[p_{2}[X]\right]\\
&=\left(-\frac{1}{q}\right)^{2i-1}e_{k-i}\left[p_{2}[X]-\frac{1-\frac{1}{q^2}}{z^2}\right]\Omega[zX]\Bigg|_{z^{2i}}\\
&=\left(-\frac{1}{q}\right)^{2i-1}\sum\limits_{j=0}^{k-i}e_{j}\left[p_{2}[X]\right]e_{k-i-j}\left[-\frac{1-\frac{1}{q^2}}{z^2}\right]\sum\limits_{n\geqslant0}h_{n}z^{n}\Bigg|_{z^{2i}}\\
&=\left(-\frac{1}{q}\right)^{2i-1}\sum\limits_{j=0}^{k-i}m_{2^{j}}(-1)^{k-i-j}h_{k-i-j}\left[\frac{1-\frac{1}{q^2}}{z^2}\right]\sum\limits_{n\geqslant0}h_{n}z^{n}\Bigg|_{z^{2i}}\\
(\text{by Equation (\ref{p-hn})} )&=\left(-\frac{1}{q}\right)^{2i-1}\left[m_{2^{k-i}}h_{2i}+\sum\limits_{j=0}^{k-i-1}(-1)^{k-i-j}\frac{q^2-1}{q^2}m_{2^{j}}h_{2k-2j}\right].
\end{align*}
Plugging this result into the right hand side of \eqref{recursion} gives
\begin{align*}
RHS
&= e_{2k}+q\sum\limits_{i=1}^{k}\left[(-1)^{k-i}\left(-\frac{1}{q}\right)^{2i-1}\left(m_{2^{k-i}}h_{2i}+\sum\limits_{j=0}^{k-i-1}(-1)^{k-i-j}\frac{q^2-1}{q^2}m_{2^{j}}h_{2k-2j}\right)\right]\\
&=e_{2k}+\sum\limits_{i=1}^{k}\left[(-1)^{k-i-1}\frac{q^2}{q^{2i}}m_{2^{k-i}}h_{2i}+\frac{q^2-1}{q^{2i}}\sum\limits_{j=0}^{k-i-1}(-1)^{j-1}m_{2^{j}}h_{2k-2j}\right]\\
&=e_{2k}+\sum\limits_{i=1}^{k}\left[\sum\limits_{j=0}^{k-i}(-1)^{j-1}\frac{1}{q^{2i-2}}m_{2^{j}}h_{2k-2j}-\sum\limits_{j=0}^{k-i-1}(-1)^{j-1}\frac{1}{q^{2i}}m_{2^{j}}h_{2k-2j}\right]\\
&=e_{2k}+\sum\limits_{i=0}^{k-1}\sum\limits_{j=0}^{k-i-1}(-1)^{j-1}\frac{1}{q^{2i}}m_{2^{j}}h_{2k-2j}-\sum\limits_{i=1}^{k}\sum\limits_{j=0}^{k-i-1}(-1)^{j-1}\frac{1}{q^{2i}}m_{2^{j}}h_{2k-2j}\\
&=e_{2k}+\sum\limits_{j=0}^{k-1}(-1)^{j-1}m_{2^{j}}h_{2k-2j}.
\end{align*}
The proposition then follows by showing that 
\begin{equation}\label{e2k}
e_{2k}=\sum\limits_{i=0}^{k}(-1)^{i}m_{2^{i}}h_{2k-2i}.
\end{equation}
This is the special case when $k=2 $ of the following Lemma \ref{l-grinberg2022petrie}.
\end{proof}

\medskip
We are able to extend the idea in Proposition \ref{p-m2k} to the $\mu=(a^k)$ case.

\begin{dfn}
Define Petrie symmetric functions by
$$G(k,n)=\sum\limits_{\lambda\vdash n, \lambda_{1}<k}m_{\lambda}.$$
In particular, $G(2,n)=e_{n}$.
\end{dfn}

We restate a symmetric function identity related to $G(k,n)$ in the following form.
\begin{lem}\cite[Theorem 2.19]{grinberg2022petrie} \label{l-grinberg2022petrie}
$$G(k,n)=\sum_{i\geq 0}(-1)^{i}m_{k^i}h_{n-ki}.$$
\end{lem}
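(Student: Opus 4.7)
The plan is to prove this identity via an auxiliary generating variable $z$. The key observation is that $G(k,n)$ enumerates monomials in partitions with all parts strictly less than $k$, so its generating function factors as an infinite product:
$$\sum_{n \geq 0} G(k,n)\, z^n \;=\; \prod_{i \geq 1}\bigl(1 + zx_i + (zx_i)^2 + \cdots + (zx_i)^{k-1}\bigr) \;=\; \prod_{i \geq 1} \frac{1 - (zx_i)^k}{1 - zx_i}.$$
The strategy is then to expand the numerator and denominator of the product separately in terms of standard symmetric function bases and extract the coefficient of $z^n$.

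For the denominator, one has the familiar generating series
$$\prod_{i \geq 1}\frac{1}{1-zx_i} \;=\; \Omega[zX] \;=\; \sum_{n \geq 0} h_n[X]\, z^n.$$
For the numerator, the generating series for elementary symmetric functions gives
$$\prod_{i \geq 1}\bigl(1 - z^k x_i^k\bigr) \;=\; \sum_{j \geq 0}(-1)^j z^{kj}\, e_j(x_1^k, x_2^k,\ldots).$$
The crucial identification is that $e_j(x_1^k, x_2^k, \ldots) = m_{k^j}[X]$, since
$$e_j(x_1^k,x_2^k,\ldots) \;=\; \sum_{i_1 < i_2 < \cdots < i_j} x_{i_1}^k x_{i_2}^k \cdots x_{i_j}^k \;=\; m_{k^j}[X],$$
which is precisely the plethystic statement $e_j[p_k[X]] = m_{k^j}$.

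Multiplying the two series and reading off the coefficient of $z^n$ then yields
$$G(k,n) \;=\; \sum_{i \geq 0}(-1)^i\, m_{k^i}\, h_{n-ki},$$
which is the desired identity. There is no real obstacle to this approach; the only conceptual point is recognizing that bounded-part partitions factor through a truncation of the geometric series and then correctly handling the plethystic substitution $e_j[p_k] = m_{k^j}$. After those observations, the proof is routine formal series manipulation.
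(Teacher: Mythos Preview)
Your argument is correct. The generating-function identity
\[
\sum_{n\ge 0} G(k,n)\,z^n \;=\; \prod_{i\ge 1}\frac{1-(zx_i)^k}{1-zx_i}
\]
is exactly the statement that a monomial contributes to $G(k,n)$ if and only if each exponent is at most $k-1$; the factorisations of numerator and denominator into $\sum_j(-1)^j m_{k^j}z^{kj}$ and $\sum_m h_m z^m$ are standard, and the identification $e_j(x_1^k,x_2^k,\ldots)=m_{k^j}$ is immediate from the definition. Extracting the coefficient of $z^n$ gives the claim.

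Note, however, that the paper does not supply its own proof of this lemma: it is quoted verbatim as \cite[Theorem~2.19]{grinberg2022petrie} and used as a black box (only the special case $G(2,2k)=e_{2k}$ is needed for Proposition~\ref{p-m2k}). So there is no ``paper's proof'' to compare against; your generating-function derivation is a perfectly acceptable self-contained justification and is in fact essentially the argument Grinberg gives.
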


\begin{thm}
For any positive integer $a$ and $k$, we have
$$(-1)^{(a-1)k}m_{a^{k}}=q^{a-1}\sum\limits_{i=1}^{k}C_{ai}\left((-1)^{(a-1)(k-i)}m_{a^{k-i}}\right)+(-1)^{ak}G(a,ak).$$
\end{thm}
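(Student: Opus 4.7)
The plan is to mimic the proof of Proposition \ref{p-m2k} with the substitution $2 \mapsto a$, exploiting the plethystic identity $m_{a^j} = e_j[p_a[X]]$ and the fact that $p_a$ is additive under plethysm. First I would apply the definition of $C_{ai}$ to $m_{a^{k-i}} = e_{k-i}[p_a[X]]$; the key simplification is
$$p_a\!\left[X - \frac{1-\tfrac{1}{q}}{z}\right] = p_a[X] - \frac{1-\tfrac{1}{q^a}}{z^a},$$
since $p_a[u] = u^a$ for any monomial $u$. Combining this with the addition formula for $e_{k-i}$, the plethystic sign rule $e_m[-B] = (-1)^m h_m[B]$, and the identity \eqref{p-hn}, I would arrive at the closed form
$$C_{ai}\, m_{a^{k-i}} = \left(-\tfrac{1}{q}\right)^{ai-1}\!\left[m_{a^{k-i}}h_{ai}[X] + \sum_{j=0}^{k-i-1}(-1)^{k-i-j}\frac{q^a-1}{q^a}\, m_{a^j}\, h_{a(k-j)}[X]\right],$$
which specializes to the formula used in Proposition \ref{p-m2k} when $a = 2$.

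Next, I would insert this into the right-hand side of the theorem, multiply through by $q^{a-1}(-1)^{(a-1)(k-i)}$, and collapse all the signs using the parity identity $(a-1)(k-i) + (ai - 1) + (k - i - j) \equiv ak - j - 1 \pmod{2}$. The ``diagonal'' contribution from the $m_{a^{k-i}} h_{ai}[X]$ term can be reindexed by $j = k - i$, while the remaining double sum should be reorganized by swapping the order of summation so that the inner sum runs over $i = 1, \dots, k-j-1$ for each $j$. That inner sum is a finite geometric series and telescopes cleanly via
$$(q^a - 1)\sum_{i=1}^{N} q^{-ai} = 1 - q^{-aN}.$$
After this cancellation, the entire expression $q^{a-1}\sum_{i=1}^{k}C_{ai}\bigl((-1)^{(a-1)(k-i)}m_{a^{k-i}}\bigr)$ should collapse to the single sum
$$\sum_{j=0}^{k-1}(-1)^{ak-j-1}\, m_{a^j}\, h_{a(k-j)}[X].$$

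Finally, I would invoke the Petrie identity of Lemma \ref{l-grinberg2022petrie} with the parameters $a$ and $ak$, giving $(-1)^{ak}G(a,ak) = \sum_{i=0}^{k}(-1)^{ak+i} m_{a^i}\, h_{a(k-i)}[X]$. Isolating the $i = k$ term and using $(-1)^{(a-1)k} = (-1)^{(a+1)k}$ (they differ by $(-1)^{2k}$), the difference $(-1)^{(a-1)k}m_{a^k} - (-1)^{ak}G(a,ak)$ equals precisely the sum displayed above, which finishes the proof. The principal obstacle I anticipate is not conceptual but combinatorial bookkeeping: keeping track of the nested signs $(-1)^{(a-1)(k-i)}$, $(-1)^{ai-1}$, $(-1)^{k-i-j}$, and the exponents of $q$ through the interchange of summation and the telescoping is more delicate than in the $a = 2$ case, because the parity of $a$ now interacts with those of $k$, $i$, and $j$; however, the single congruence above together with the explicit geometric-series formula should control everything.
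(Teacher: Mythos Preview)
Your proposal is correct and follows essentially the same route as the paper's proof: the paper computes $C_{ai}m_{a^{k-i}}$ via the plethystic identity $m_{a^{k-i}}=e_{k-i}[p_a[X]]$ and the addition formula exactly as you describe, obtains the same closed form, and then reduces the sum over $i$ to $\sum_{j=0}^{k-1}(-1)^{ak-j-1}m_{a^j}h_{a(k-j)}$ before invoking Lemma~\ref{l-grinberg2022petrie}. The only cosmetic difference is in the telescoping step: the paper absorbs the ``diagonal'' term $m_{a^{k-i}}h_{ai}$ into the $j$-sum as its top entry $j=k-i$, then shifts $i\mapsto i+1$ in one of the two resulting double sums so that they cancel except at $i=0$; your plan to swap the order of summation and sum the geometric series in $q^{-ai}$ achieves the same cancellation and is equally valid.
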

\begin{proof}
Using a similar plethystic
calculation as Proposition \ref{p-m2k}, we obtain
\begin{align*}
C_{ai}m_{a^{k-i}} &= C_{ai}e_{k-i}\left[p_{a}[X]\right]\\
&=\left(-\frac{1}{q}\right)^{ai-1}e_{k-i}\left[p_{a}[X]-\frac{1-\frac{1}{q^a}}{z^a}\right]\Omega[zX]\Bigg|_{z^{ai}}\\
&=\left(-\frac{1}{q}\right)^{ai-1}\sum\limits_{j=0}^{k-i}e_{j}\left[p_{a}[X]\right]e_{k-i-j}\left[-\frac{1-\frac{1}{q^a}}{z^a}\right]\sum\limits_{n\geqslant0}h_{n}z^{n}\Bigg|_{z^{ai}}\\
&=\left(-\frac{1}{q}\right)^{ai-1}\sum\limits_{j=0}^{k-i}m_{a^{j}}(-1)^{k-i-j}h_{k-i-j}\left[\frac{1-\frac{1}{q^a}}{z^a}\right]\sum\limits_{n\geqslant0}h_{n}z^{n}\Bigg|_{z^{ai}}\\
&=\left(-\frac{1}{q}\right)^{ai-1}\left[m_{a^{k-i}}h_{ai}+\sum\limits_{j=0}^{k-i-1}(-1)^{k-i-j}\frac{q^a-1}{q^a}m_{a^{j}}h_{ak-aj}\right].
\end{align*}
Then
\begin{align*}
&q^{a-1}\sum\limits_{i=1}^{k}C_{ai}\left((-1)^{(a-1)(k-i)}m_{a^{k-i}}\right)\\
&=q^{a-1}\sum\limits_{i=1}^{k}\left[(-1)^{(a-1)(k-i)}\left(-\frac{1}{q}\right)^{ai-1}\left(m_{a^{k-i}}h_{ai}+\sum\limits_{j=0}^{k-i-1}(-1)^{k-i-j}\frac{q^a-1}{q^a}m_{a^{j}}h_{ak-aj}\right)\right]\\
&=\sum\limits_{i=1}^{k}\left[(-1)^{ak-k+i-1}\frac{q^a}{q^{ai}}m_{a^{k-i}}h_{ai}+\frac{q^a-1}{q^{ai}}\sum\limits_{j=0}^{k-i-1}(-1)^{ak-j-1}m_{a^{j}}h_{ak-aj}\right]\\
&=\sum\limits_{i=1}^{k}\left[\sum\limits_{j=0}^{k-i}(-1)^{ak-j-1}\frac{q^a}{q^{ai}}m_{a^{j}}h_{ak-aj}-\sum\limits_{j=0}^{k-i-1}(-1)^{ak-j-1}\frac{1}{q^{ai}}m_{a^{j}}h_{ak-aj}\right]\\
&=\sum\limits_{i=0}^{k-1}\sum\limits_{j=0}^{k-i-1}(-1)^{ak-j-1}\frac{1}{q^{ai}}m_{a^{j}}h_{ak-aj}-\sum\limits_{i=1}^{k}\sum\limits_{j=0}^{k-i-1}(-1)^{ak-j-1}\frac{1}{q^{ai}}m_{a^{j}}h_{ak-aj}\\
&=\sum\limits_{j=0}^{k-1}(-1)^{ak-j-1}m_{a^{j}}h_{ak-aj},
\end{align*}
and the theorem follows by Lemma \ref{l-grinberg2022petrie} with respect to $G(a,ak)$.
\end{proof}

The Conjecture 5.2 in \cite{grinberg2022petrie} shows that there may be a certain pattern about the Schur positivity of $\nabla G(k,n)$. We focus on a special case and propose the following conjecture.

\begin{cnj}\label{cnj-Gkn}
For any positive integers $a$ and $k$, $(-1)^{ak}\nabla G(a,ak)$ is Schur positive.
\end{cnj}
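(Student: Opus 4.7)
The approach is to strengthen Conjecture \ref{cnj-Gkn} by exhibiting a positive expansion of $(-1)^{ak}G(a,ak)$ in the $\{C_\alpha\}$ basis; Schur positivity of $(-1)^{ak}\nabla G(a,ak)$ would then follow immediately from the compositional shuffle theorem. The key starting point is the identity from the theorem immediately preceding Conjecture \ref{cnj-Gkn}, which rearranges to
\begin{equation*}
(-1)^{ak}G(a,ak)=(-1)^{(a-1)k}m_{a^k}-q^{a-1}\sum_{i=1}^{k}C_{ai}\bigl((-1)^{(a-1)(k-i)}m_{a^{k-i}}\bigr).
\end{equation*}
This is the direct analog of Proposition \ref{p-m2k}: in the $a=2$ case it collapses to $G(2,2k)=e_{2k}=\sum_{\alpha\vDash 2k}C_\alpha$, yielding a trivial positive $C$-expansion.

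The next step is to iterate the above identity, expressing $(-1)^{(a-1)k}m_{a^k}$ as a nested sum of $C$-operators applied to $(-1)^{aj}G(a,aj)$ for $j\leq k$:
\begin{equation*}
(-1)^{(a-1)k}m_{a^k}=\sum_{l\ge 0}q^{l(a-1)}\sum_{\substack{\alpha:\,|\alpha|\le k\\ l(\alpha)=l}}C_{a\alpha_1}\cdots C_{a\alpha_l}\bigl((-1)^{a(k-|\alpha|)}G(a,a(k-|\alpha|))\bigr).
\end{equation*}
This identity shows that $C$-positivity of $(-1)^{aj}G(a,aj)$ for all $j\le k$ automatically implies $C$-positivity of $(-1)^{(a-1)k}m_{a^k}$, which in turn would resolve Conjecture \ref{nabla m} in the rectangular case $\mu=(a^k)$. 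I would therefore prove $C$-positivity of $(-1)^{ak}G(a,ak)$ by induction on $k$, leveraging the generating function identity $\sum_n G(a,n)z^n=\prod_i\bigl(1+zx_i+\cdots+z^{a-1}x_i^{a-1}\bigr)=\Omega[zX]\sum_k(-z^a)^k m_{a^k}$ together with the plethystic definition of the $C_m$ operators, in the spirit of Proposition \ref{p-m2k}.

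The main obstacle lies in pinpointing the correct combinatorial formula for the $C$-coefficients. For $a=2$ these coefficients are all equal to $1$, but for $a=3$, $k=1$ a direct computation (combining Sergel's formula $p_3=(1+q+q^2)C_3+(1+q)C_{2,1}+C_{1,2}+C_{1,1,1}$ with the recursion above) yields
\begin{equation*}
-G(3,3)=(1+q)C_3+(1+q)C_{2,1}+C_{1,2}+C_{1,1,1},
\end{equation*}
indicating a nontrivial $q$-weighting that appears to depend on an $a$-analog of the $erun$ statistic on compositions. Identifying the correct statistic and verifying the resulting conjectural formula via plethystic manipulation is the hardest part; once such a formula is guessed, the verification should reduce to a calculation parallel to the one performed for $a=2$ in Proposition \ref{p-m2k}, using the plethystic addition formulas and \eqref{p-hn} to collapse the telescoping double sum.
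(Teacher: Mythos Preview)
The statement you are addressing is \emph{Conjecture}~\ref{cnj-Gkn}: the paper explicitly leaves it open and provides no proof. So there is no ``paper's own proof'' to compare against, and what you have written is not a proof either---it is a strategy sketch that stops precisely at the point where the real work would begin.

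Concretely, you propose to establish the strictly stronger assertion that $(-1)^{ak}G(a,ak)$ is $C$-positive. Your iteration of the theorem preceding Conjecture~\ref{cnj-Gkn} is correct, and it does show that $C$-positivity of $(-1)^{aj}G(a,aj)$ for all $j\le k$ would imply $C$-positivity of $(-1)^{(a-1)k}m_{a^{k}}$; this is exactly the content of the paper's Corollary following the conjecture, so you are not gaining anything new there. Your $a=3$, $k=1$ computation is also correct. But then you write ``Identifying the correct statistic and verifying the resulting conjectural formula via plethystic manipulation is the hardest part''---and you do not do it. That sentence is the entire difficulty; everything before it is bookkeeping the paper already contains.

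Two further cautions. First, $C$-positivity is genuinely stronger than Schur positivity of $\nabla$, so your route could in principle fail even if Conjecture~\ref{cnj-Gkn} is true; you have given no evidence beyond $k=1$ that $(-1)^{ak}G(a,ak)$ is $C$-positive for $a\ge 3$. Second, the recursion you display expresses $m_{a^k}$ in terms of $G(a,aj)$ for $j\le k$, which is the wrong direction for an induction on $k$ aimed at the $G$'s themselves; you would need a separate identity to run that induction, and you have not supplied one.
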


\begin{cor}
Conjecture \ref{cnj-Gkn} implies the Schur positivity of $(-1)^{(a-1)k}\nabla m_{a^{k}}$.
\end{cor}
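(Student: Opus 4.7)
The plan is to proceed by induction on $k$, strengthening the claim to a $C$-positivity statement: that Conjecture \ref{cnj-Gkn} implies $(-1)^{(a-1)k}m_{a^k}=\sum_\alpha c_\alpha C_\alpha(1)$ with coefficients $c_\alpha\in\mathbb{N}[q,t]$. Once this stronger claim is in hand, Schur positivity of $(-1)^{(a-1)k}\nabla m_{a^k}$ follows immediately by applying $\nabla$ termwise and invoking the compositional shuffle theorem of Carlsson--Mellit, since each $\nabla C_\alpha(1)$ is the Schur-positive parking-function sum $\sum_{touch(P)=\alpha}q^{dinv(P)}t^{area(P)}F_{ides(P)}$.

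The base case $k=1$ uses $m_a=p_a$ together with Sergel's theorem (Theorem 1.3 in the excerpt), which gives the explicit positive $C$-expansion $(-1)^{a-1}m_a=\sum_{\alpha\vDash a}[\alpha_1]_q C_\alpha(1)$. For the inductive step, apply the recursion
\[
(-1)^{(a-1)k}m_{a^k}=q^{a-1}\sum_{i=1}^{k}C_{ai}\bigl((-1)^{(a-1)(k-i)}m_{a^{k-i}}\bigr)+(-1)^{ak}G(a,ak)
\]
proved just before Conjecture \ref{cnj-Gkn}. The inductive hypothesis supplies a $C$-positive expansion of each $(-1)^{(a-1)(k-i)}m_{a^{k-i}}$, and since $C_{ai}$ applied to any $C$-product $C_\beta(1)$ produces the longer $C$-product $C_{(ai,\beta)}(1)$, the entire first sum is manifestly $C$-positive with coefficients in $\mathbb{N}[q,t]$.

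The main obstacle is the residual term $(-1)^{ak}G(a,ak)$, and this is precisely where Conjecture \ref{cnj-Gkn} must be used. The conjecture asserts Schur positivity of $(-1)^{ak}\nabla G(a,ak)$, which is a priori weaker than the $C$-positivity of $(-1)^{ak}G(a,ak)$ required to close the induction. Bridging this gap is the hard part. One route is to strengthen Conjecture \ref{cnj-Gkn} to a compositional ($C$-positive) refinement analogous to Sergel's bi-brick permutation formulation of Conjecture~1.6. An alternative is to iterate the recursion fully to
\[
(-1)^{(a-1)k}m_{a^k}=\sum_{s=0}^{k}(-1)^{as}\sum_{\beta\vDash k-s}q^{l(\beta)(a-1)}C_{a\beta_1}\cdots C_{a\beta_{l(\beta)}}\bigl(G(a,as)\bigr),
\]
and to show directly that each summand $(-1)^{as}\nabla\bigl(C_{a\beta_1}\cdots C_{a\beta_{l(\beta)}}G(a,as)\bigr)$ is Schur positive. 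The case $\beta=\emptyset$ is exactly Conjecture \ref{cnj-Gkn}; the nonempty case — transferring Schur positivity through additional layers of $C_{a\beta_j}$ operators — is the essential combinatorial step that is not automatic from the compositional shuffle theorem alone, and I expect it to be the principal technical difficulty.
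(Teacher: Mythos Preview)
The paper gives no proof of this corollary; it is stated bare, presumably meant to follow from the preceding recursion together with Conjecture~\ref{cnj-Gkn} by induction on $k$, exactly along the lines you sketch. Your analysis is in fact sharper than the paper's silence: you have correctly located a genuine obstacle that the paper does not address.

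The issue is precisely the one you flag. To run the induction one needs $C$-positivity of each $(-1)^{(a-1)(k-i)}m_{a^{k-i}}$ (so that applying $C_{ai}$ again yields a $C$-positive expression, hence a Schur-positive $\nabla$-image). Closing the induction therefore requires $C$-positivity of the residual term $(-1)^{ak}G(a,ak)$, whereas Conjecture~\ref{cnj-Gkn} asserts only Schur positivity of $(-1)^{ak}\nabla G(a,ak)$. There is no known principle by which Schur positivity of $\nabla f$ alone forces Schur positivity of $\nabla C_{a}f$; the compositional shuffle theorem delivers this only when $f$ itself is $C$-positive. In the $a=2$ case treated earlier in the section this works because $(-1)^{2k}G(2,2k)=e_{2k}=\sum_{\alpha\vDash 2k}C_\alpha$ is $C$-positive; for general $a$ no analogous fact is available.

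So your diagnosis is correct: as literally stated, the corollary needs either the $C$-positive strengthening of Conjecture~\ref{cnj-Gkn} that you propose, or an independent positivity argument for the layered terms $\nabla C_{a\beta_1}\cdots C_{a\beta_{l}}G(a,as)$. Neither is supplied by the paper. Your proposal does not close the gap, but it identifies it accurately; the corollary should really be read as contingent on the (unstated) $C$-positive refinement of the conjecture.
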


\section{Proof of the Main Theorem}
We first introduce the adjoint operator $C_a^{\lor}$ and then prove the main theorem.

\subsection{The adjoint operator $C_a^{\lor}$}
Our proof relies on the adjoint operator $C_a^{\lor}$ of $C_a$ with respect to the Hall scalar product:
$$\left<f, C_{a}g\right>=\left<C_{a}^{\lor}f, g\right>  \text{ for any } f, g\in \Lambda.$$

This is analogous to Garsia's $C_{a}^{\ast}$ operator, which is the adjoint operator of $C_{a}$ under the $\ast$-scalar product:
$$\left<f, C_{a}g\right>=\left<C_{a}^{\ast}f, g\right>_{\ast} \text{ for any } f, g\in \Lambda.$$
The $C_{a}^{\ast}$ operator has a plethystic formula as follows.
\begin{prop}\cite[Theorem 3.6]{garsia2012hall}
For any $f\in \Lambda$ and $a\in\mathbb{Z}$,
$$C_{a}^{\ast}f[X]=\left(-\frac{1}{q}\right)^{a-1}f\left[X-\frac{\epsilon M}{z}\right]\Omega\left[\frac{-\epsilon zX}{q(1-t)}\right]\Bigg|_{\frac{1}{z^{a}}}.$$
\end{prop}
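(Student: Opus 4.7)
The plan is to derive the formula by the reproducing-kernel method adapted to the $\ast$-scalar product.

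First, I would identify the reproducing kernel $K_\ast(X,Y)$ of $\langle\cdot,\cdot\rangle_\ast$: using the plethystic identity $p_k[-\epsilon Y/M] = (-1)^{k-1}p_k(Y)/((1-q^k)(1-t^k))$, the sum $\sum_\lambda p_\lambda(X) p_\lambda(Y)/\langle p_\lambda,p_\lambda\rangle_\ast$ collapses to
$$K_\ast(X,Y) \;=\; \Omega\!\left[-\tfrac{\epsilon XY}{M}\right], \qquad \langle K_\ast(X,Y), g(X)\rangle_{\ast,X} \;=\; g(Y).$$
Next, for any operator $T:\Lambda\to\Lambda$ satisfying $\langle f, Tg\rangle = \langle T^\ast f, g\rangle_\ast$, applying $T$ in the $X$-variable of the identity $g(X) = \langle K_\ast(X,Y),g(Y)\rangle_{\ast,Y}$ and swapping the two pairings yields the mixed-kernel formula
$$T^\ast f(Y) \;=\; \langle f(X),\, T^X K_\ast(X,Y)\rangle_{H,X}.$$

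Specializing $T = C_a$, the plethystic definition of $C_a$ together with the multiplicativity $\Omega[A+B] = \Omega[A]\Omega[B]$ give
$$C_a^X K_\ast(X,Y) \;=\; \left(-\tfrac{1}{q}\right)^{\!a-1}\!K_\ast(X,Y)\cdot \Omega\!\left[\tfrac{\epsilon(1-1/q) Y}{Mz}\right]\cdot \Omega[zX]\bigg|_{z^a}.$$
Absorbing the remaining $X$-dependence through the Hall reproducing property $\langle f(X),\Omega[X\cdot A]\rangle_{H,X} = f[A]$ with $A = z - \epsilon Y/M$, and simplifying $\epsilon(1-1/q)/M = -\epsilon/(q(1-t))$, one obtains a closed form for $C_a^\ast f[Y]$ as the coefficient of $z^a$ in an explicit plethystic product. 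A final change of dummy variable $z \mapsto 1/z$ converts the extraction $|_{z^a}$ into $|_{1/z^a}$ and rewrites the $\Omega$-factor as $\Omega[-\epsilon zY/(q(1-t))]$, matching the form in the statement.

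The main obstacle is the plethystic bookkeeping in the last step: aligning the argument of $f$ produced by the derivation with the expression $X - \epsilon M/z$ in the stated formula requires careful tracking of the sign monomial $\epsilon$, the Laurent-series extraction, and any implicit plethystic rescaling of the $X$-alphabet. A more robust cross-check, if the symbolic manipulation proves delicate, is to verify $\langle p_\mu, C_a p_\nu\rangle_H = \langle C_a^\ast p_\mu, p_\nu\rangle_\ast$ directly on the power-sum basis $\{p_\lambda\}$, where both sides reduce to explicit rational functions in $q$ and $t$.
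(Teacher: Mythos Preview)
The paper does not prove this proposition at all: it is quoted from \cite{garsia2012hall} as a known result, so there is no ``paper's own proof'' to compare against.

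That said, your outline has a genuine gap, caused by a typo in the paper's displayed definition of $C_a^{\ast}$. The text states that $C_a^{\ast}$ is the adjoint of $C_a$ \emph{under the $\ast$-scalar product}, and this is exactly how it is used in the paper's First Proof of Proposition~\ref{p-calor}: the step $\langle f[-\epsilon X/M], C_a g\rangle_\ast = \langle C_a^{\ast}(f[-\epsilon X/M]), g\rangle_\ast$ requires the defining relation $\langle h, C_a g\rangle_\ast = \langle C_a^{\ast} h, g\rangle_\ast$, with $\ast$ on \emph{both} sides. The displayed equation in the paper (Hall on the left, $\ast$ on the right) is therefore a misprint, and you have built your argument on it. With the mixed relation your kernel formula $T^{\ast}f(Y)=\langle f(X), T^X K_\ast(X,Y)\rangle_{H,X}$ and the Hall--Cauchy identity $\langle f(X),\Omega[XA]\rangle_H=f[A]$ force the argument of $f$ to be $A=z-\epsilon Y/M$; after $z\mapsto 1/z$ this becomes $f[\,1/z-\epsilon Y/M\,]$, which is \emph{not} the required $f[\,Y-\epsilon M/z\,]$. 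The ``plethystic bookkeeping'' you flag as the obstacle is thus not a matter of sign tracking but an actual mismatch.

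The fix is minimal: keep your reproducing kernel $K_\ast(X,Y)=\Omega[-\epsilon XY/M]$, but use the $\ast$-adjoint relation on both sides. Then $C_a^{\ast}f(Y)=\langle f(X), C_a^X K_\ast(X,Y)\rangle_{\ast,X}$, and since $K_\ast(X,Y)\,\Omega[zX]=\Omega[-\epsilon X(Y-\epsilon zM)/M]$, the $\ast$-Cauchy identity $\langle f(X),\Omega[-\epsilon XA/M]\rangle_\ast=f[A]$ gives $f[Y-\epsilon zM]$; your simplification $\epsilon(1-1/q)/M=-\epsilon/(q(1-t))$ and the substitution $z\mapsto 1/z$ then yield the stated formula on the nose.
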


\begin{prop}\cite{garsia2012hall}
For any $f\in \Lambda$, we denote $f^{\ast}=f\left[\frac{X}{M}\right]$, then
$$\left<f,g\right>=\left<f,\omega g^{\ast}\right>_{\ast}=\left<f,g\left[\frac{-\epsilon X}{M}\right]\right>_{\ast}.$$
\end{prop}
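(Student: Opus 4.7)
The plan is to reduce everything to the power sum basis, where all three operations (the two scalar products, $\omega$, and the starred operation $(\cdot)^{\ast}$) act diagonally, and then match coefficients. By bilinearity of both pairings, it suffices to establish the two equalities when $f = p_{\lambda}$ and $g = p_{\mu}$ for arbitrary partitions $\lambda,\mu$.

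For the second equality, I would observe that it is essentially a restatement of the identity $h[-\epsilon X] = \omega h[X]$ recorded in Section~2. Specializing to $h(X) = g[X/M]$ and substituting $-\epsilon X$ for $X$ gives
$$g\!\left[\frac{-\epsilon X}{M}\right] = \omega\!\left(g\!\left[\frac{X}{M}\right]\right) = \omega g^{\ast},$$
so both sides of the second equality are literally the same symmetric function paired against $f$ under $\left<\cdot,\cdot\right>_{\ast}$.

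For the first equality, I would first use the plethystic multiplicativity $p_k[XY] = p_k[X] p_k[Y]$ with $Y = M = (1-q)(1-t)$, which yields $p_k[M] = (1-q^k)(1-t^k)$ and hence
$$p_{\mu}^{\ast} = p_{\mu}\!\left[\frac{X}{M}\right] = \frac{p_{\mu}[X]}{\prod_i (1-q^{\mu_i})(1-t^{\mu_i})}.$$
Applying $\omega$ then picks up a sign $(-1)^{|\mu|-l(\mu)}$. Plugging this into $\left<p_{\lambda}, \omega p_{\mu}^{\ast}\right>_{\ast}$ and unpacking the definition of the $\ast$-pairing, the factor $(-1)^{|\mu|-l(\mu)} \prod_i (1-q^{\mu_i})(1-t^{\mu_i})$ that sits in front of $\omega p_{\mu}^{\ast}$ cancels exactly the corresponding factor appearing in $\left<p_{\lambda}, p_{\mu}\right>_{\ast}$, leaving $z_{\mu}\chi(\lambda=\mu)$, which is precisely $\left<p_{\lambda}, p_{\mu}\right>$.

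There is no real obstacle in the argument; the only care needed is to respect the conventions that $\epsilon$ is a formal monomial of ``value'' $-1$ (so that plethystically $p_k[-\epsilon X] = (-1)^{k-1} p_k[X]$, matching the action of $\omega$) and that $M$ enters as a product under plethysm, so that $p_k[M] = (1-q^k)(1-t^k)$ rather than $1 - q^k - t^k + (qt)^k$ as one would get under a sum convention. With these conventions fixed, the proof reduces to the short coefficient-matching on the $\{p_{\mu}\}$ basis sketched above.
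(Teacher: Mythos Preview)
The paper does not actually prove this proposition; it is quoted from \cite{garsia2012hall} and used as a black box in the first proof of Proposition~\ref{p-calor}. Your verification on the power-sum basis is correct and is the standard way to check this identity. One harmless slip: in your final caveat you contrast $(1-q^{k})(1-t^{k})$ with $1-q^{k}-t^{k}+(qt)^{k}$, but these are the same polynomial, so there is no ambiguity to resolve. The plethysm $p_{k}[M]$ is computed by treating $M=(1-q)(1-t)$ as the formal alphabet $1-q-t+qt$, and $p_{k}$ raises each monomial to the $k$th power; either viewpoint gives $(1-q^{k})(1-t^{k})$.
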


We derive an analogous plethystic formula of $C_{a}^{\lor}$ as follows.
\begin{prop}\label{p-calor}
For any $f\in \Lambda$ and $a\in\mathbb{Z}$,
\begin{equation}\label{calor}
C_{a}^{\lor}f[X]=\left(-\frac{1}{q}\right)^{a-1}f\left[X+\frac{1}{z}\right]\Omega\left[\frac{1-q}{q}zX\right]\Bigg |_{\frac{1}{z^{a}}}.
\end{equation}
\end{prop}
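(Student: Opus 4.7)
The plan is to start from the plethystic definition of $C_a g$, pair it with $f$ under the Hall scalar product, and use two standard adjunction identities to move all $X$-dependence from the $g$-slot into the $f$-slot. A single change of variable $z\mapsto 1/z$ at the end then converts $[z^a]$ into $[1/z^a]$ and produces the claimed formula for $C_a^{\lor}$.

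The two adjunction identities needed, valid for any alphabet $E$ not involving $X$, are
\[
\langle \Omega[zX]\cdot f,\ g\rangle_X \;=\; \langle f,\ g[X+z]\rangle_X,
\]
and more generally
\[
\langle f[X],\ g[X+E]\rangle_X \;=\; \langle f[X]\cdot \Omega[XE],\ g[X]\rangle_X.
\]
Both follow from the Cauchy-type expansion $g[X+E]=\sum_\lambda s_\lambda[E]\,(s_\lambda^{\perp}g)[X]$ together with the fact that $s_\lambda^{\perp}$ is the Hall-adjoint of multiplication by $s_\lambda$; they can also be checked directly on the power-sum basis, since $h_n^{\perp}g = [z^n]\,g[X+z]$.

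The computation itself is then short. Starting from
\[
\langle f,\ C_a g\rangle \;=\; \left(-\tfrac{1}{q}\right)^{a-1}\!\left\langle f,\ g\!\left[X-\tfrac{1-1/q}{z}\right]\cdot \Omega[zX]\right\rangle_X\!\!\bigg|_{z^a},
\]
I first apply the $\Omega[zX]$ identity, which peels $\Omega[zX]$ off the $g$-slot as the translation $f\mapsto f[X+z]$, and then apply the general identity with $E=-(1-1/q)/z = (1-q)/(qz)$, which peels the plethystic shift off the $g$-slot as multiplication by $\Omega[(1-q)X/(qz)]$. This produces
\[
\langle f,\ C_a g\rangle \;=\; \left(-\tfrac{1}{q}\right)^{a-1}\!\left\langle f[X+z]\cdot \Omega\!\left[\tfrac{(1-q)X}{qz}\right],\ g\right\rangle_X\!\!\bigg|_{z^a}.
\]

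Finally I substitute $z\mapsto 1/z$ in the bracketed expression: this sends $f[X+z]$ to $f[X+1/z]$, sends $\Omega[(1-q)X/(qz)]$ to $\Omega[(1-q)zX/q]$, and converts $[z^a]$ into $[1/z^a]$ because $[z^a]\Phi(z) = [z^{-a}]\Phi(1/z)$ for any formal Laurent series $\Phi$. Reading off $C_a^{\lor} f$ from the resulting scalar product then yields exactly \eqref{calor}. I expect the main (and mild) obstacle to be handling the two adjunction identities cleanly in the formal-series setting, where $z$ is an auxiliary parameter external to the alphabet of the scalar product and $E=(1-q)/(qz)$ is a non-polynomial expression, and keeping track of the sign flip $-(1-1/q)=(1-q)/q$ so that the final plethystic bracket matches the statement precisely.
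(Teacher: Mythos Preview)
Your argument is correct. It is not the route taken in either of the paper's two proofs, but it coincides with the alternative argument the paper records immediately afterward in a Remark, attributed to a reviewer: the Hall-adjoint of the translation $g\mapsto g\bigl[X-\tfrac{1-1/q}{z}\bigr]$ is multiplication by $\Omega\bigl[\tfrac{1-q}{qz}X\bigr]$, and the Hall-adjoint of multiplication by $\Omega[zX]$ is $f\mapsto f[X+z]$. Composing these and substituting $z\mapsto 1/z$ gives \eqref{calor}, exactly as you do.

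For comparison, the paper's own proofs proceed differently. The first proof converts the Hall inner product to the $\ast$-inner product via $\langle f,g\rangle=\bigl\langle f,\,g\bigl[\tfrac{-\epsilon X}{M}\bigr]\bigr\rangle_\ast$, invokes the already-established plethystic formula for the $\ast$-adjoint $C_a^{\ast}$, and then undoes the substitution; this is efficient because $C_a^{\ast}$ is known, but it imports the $\ast$-machinery. The second proof is a bare-hands verification that both sides of \eqref{calor} give the same value of $\langle C_a^{\lor}p_\lambda,p_\mu\rangle$ by expanding $p_\lambda[X+Y]$ and matching multinomial coefficients. Your approach is the most conceptual of the three: it isolates the two elementary adjunctions (translation versus multiplication by a Cauchy kernel) and applies them in order, so nothing beyond the identity $g[X+E]=\sum_\lambda s_\lambda[E]\,s_\lambda^{\perp}g$ is needed.
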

We give two proofs of this result.
\begin{proof}[First Proof]
For any $g\in \Lambda$,
\begin{align*}
\left<C_{a}^{\lor}f, g\right>=\left<f, C_{a}g\right>&=\left<f\left[\frac{-\epsilon X}{M}\right], C_{a}g\right>_{\ast}\\
&=\left<C_{a}^{\ast}f\left[\frac{-\epsilon X}{M}\right],g\right>_{\ast}\\
&=\left<\left(-\frac{1}{q}\right)^{a-1}f\left[\frac{-\epsilon}{M}\left(X-\frac{\epsilon M}{z}\right)\right]\Omega\left[\frac{-\epsilon z}{q}\frac{X(1-q)}{M}\right]\Bigg |_{\frac{1}{z^{a}}}, g\right>_{\ast}\\
&=\left<\left(-\frac{1}{q}\right)^{a-1}f\left[\frac{-\epsilon X}{M}+\frac{1}{z}\right]\Omega\left[\frac{-\epsilon}{M}\frac{(1-q)zX}{q}\right]\Bigg|_{\frac{1}{z^{a}}}, g\right>_{\ast}\\
&=\left<\left(-\frac{1}{q}\right)^{a-1}f\left[X+\frac{1}{z}\right]\Omega\left[\frac{1-q}{q}zX\right]\Bigg |_{\frac{1}{z^{a}}}, g\right>.
\end{align*}
\end{proof}

\begin{proof}[Second Proof]
By the bi-linearity of the Hall scalar product, it suffices to assume $f=p_\lambda$ and $g=p_\mu$. Clearly
$\mu\vdash |\lambda|-a$. We need the
following two formulas. One is
$$ p_\lambda[X+Y]= \sum_{p_\alpha p_\beta=p_\lambda}\binom{m(\lambda)}{m(\alpha)}  p_\alpha[X] p_\beta[Y],$$
where $m(\gamma)=(m_1(\gamma),m_2(\gamma),\dots)$ for a partition $\gamma$, and
$$\binom{m(\lambda)}{m(\alpha)}=\prod_{i=1}^{\lambda_{1}}\binom{m_{i}(\lambda)}{m_{i}(\alpha)};$$
the other is
$$ \frac{z_\lambda}{z_\beta} =\frac{\prod_{i} i^{m_{i}(\lambda)}m_{i}(\lambda)!}{\prod_{i} i^{m_{i}(\beta)}m_{i}(\beta)!}=
\prod_{i} i^{m_{i}(\alpha)}m_{i}(\alpha)!\frac{m_{i}(\lambda)!}{m_{i}(\beta)!m_{i}(\alpha)!}
=\binom{m(\lambda)}{m(\alpha)} z_\alpha.$$

We have
\begin{align*}
\left<C_{a}^{\lor}p_{\lambda},p_{\mu}\right>&=\left<p_{\lambda},C_{a}p_{\mu}\right>\\
&=\left<p_{\lambda},\left(-\frac{1}{q}\right)^{a-1}p_{\mu}\left[X-\frac{1-\frac{1}{q}}{z}\right]\Omega\left[zX\right]\Bigg|_{z^{a}}\right>\\
&=\left(-\frac{1}{q}\right)^{a-1}\left<p_{\lambda},\sum\limits_{p_\alpha p_\rho=p_\mu}
\binom{m(\mu)}{m(\alpha)}   p_{\alpha}[X]p_{\rho}
\left[\frac{1-q}{q}\right]\frac{1}{z^{|\rho|}}\sum\limits_{n\geq 0}\sum\limits_{\beta\vdash n}\frac{1}{z_{\beta}}z^{|\beta|}p_{\beta}[X]\Bigg|_{z^{a}}\right>
\\
&=\left(-\frac{1}{q}\right)^{a-1} \sum_{ p_\alpha p_\rho=p_\mu, p_\alpha p_ \beta=p_\lambda}
\binom{m(\mu)}{m(\alpha)} \frac{z^{|\beta|}}{z^{|\rho|}}\frac{z_\lambda}{z_{\beta}}p_{\rho}\left[\frac{1-q}{q}\right] \Bigg|_{z^{a}}\\
&=\left(-\frac{1}{q}\right)^{a-1} \sum_{ p_\alpha p_\rho=p_\mu} \binom{m(\mu)}{m(\alpha)}\binom{m(\lambda)}{m(\alpha)}z_\alpha p_{\rho}\left[\frac{1-q}{q}\right],
\end{align*}
where in the last step, we removed the variable $z$, since $p_\alpha p_\beta=p_\lambda$, $p_\alpha p_\rho=p_\mu$ and $|\lambda|=|\mu|+a$ implies that
$|\beta|-|\rho|=a$.

On the other hand,
\begin{align*}
\left<C_{a}^{\lor}p_{\lambda},p_{\mu}\right>&=\left<\left(-\frac{1}{q}\right)^{a-1}p_{\lambda}\left[X+\frac{1}{z}\right]\Omega\left[\frac{1-q}{q}zX\right]\Bigg|_{\frac{1}{z^{a}}},p_{\mu}\right>\\
&=\left(-\frac{1}{q}\right)^{a-1}\left<\sum\limits_{p_\alpha p_\beta=p_\lambda}
\binom{m(\lambda)}{m(\alpha)}p_{\alpha}[X]p_{\beta}\left[\frac{1}{z}\right]\sum\limits_{n\geq 0}\sum\limits_{\rho\vdash n}\frac{1}{z_{\rho}}p_{\rho}\left[\frac{1-q}{q}\right]z^{|\rho|}p_{\rho}[X]\Bigg|_{\frac{1}{z^{a}}},p_{\mu}\right>\\
&=\left(-\frac{1}{q}\right)^{a-1} \sum_{
p_\alpha p_\beta=p_\lambda, p_\alpha p_\rho =p_\mu}
\binom{m(\lambda)}{m(\alpha)}\frac{z^{|\rho|}}{z^{|\beta|}}
\frac{z_\mu}{z_{\rho}}p_{\rho}\left[\frac{1-q}{q}\right]  \Bigg|_{\frac{1}{z^{a}}}\\
&=\left(-\frac{1}{q}\right)^{a-1} \sum_{
p_\alpha p_\rho =p_\mu}
\binom{m(\lambda)}{m(\alpha)}\binom{m(\mu)}{m(\alpha)}z_{\alpha}p_{\rho}\left[\frac{1-q}{q}\right].
\end{align*}
This completes the proof.
\end{proof}

\begin{rem}
One of the reviewers has presented an additional argument. In the context of the Hall scalar product, the adjoint operation of $f\mapsto f\left[X-\frac{1-\frac{1}{q}}{z}\right]$ is given by $f\mapsto f\Omega\left[\frac{1-q}{qz}X\right]$ and the adjoint operation of $f\mapsto f\Omega[zX]$ is $f\mapsto f[X+z]$. It follows that
$$C_{a}^{\lor}f[X]=\left(-\frac{1}{q}\right)^{a-1}f[X+z]\Omega\left[\frac{1-q}{qz}X\right]\bigg|_{z^a}.$$
This is equivalent to Proposition \ref{p-calor} after using $1/z$ instead of $z$.
\end{rem}

\begin{prop}
For any $a\in\mathbb{Z}$ and partitions $\lambda=(1^{m_{1}}2^{m_{2}}\cdots s^{m_{s}})$ and $\mu$,
\begin{equation}\label{camh}
\left<C_{a}m_{\mu},h_{\lambda}\right>=\left<m_{\mu},\left(-\frac{1}{q}\right)^{a-1}\prod\limits_{i=1}^{s}H_{i}(z)^{m_{i}}\frac{1}{z^{|\lambda|}}\sum\limits_{n\geq 0}\frac{z^{n}}{q^{n}}h_{n}\frac{1}{H(z)}\Bigg|_{\frac{1}{z^{a}}}\right>,
\end{equation}
where $H_{i}(z)=\sum\limits_{j=0}^{i}h_{j}z^{j}$ and $H(z)=\sum\limits_{j=0}^{\infty}h_{j}z^{j}$.
\end{prop}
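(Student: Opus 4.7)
The plan is to apply the adjointness of $C_a^{\lor}$ with respect to the Hall scalar product and then expand the plethystic formula for $C_{a}^{\lor} h_\lambda$ given by Proposition~\ref{p-calor}. Since $\langle C_a m_\mu, h_\lambda\rangle = \langle m_\mu, C_a^{\lor} h_\lambda\rangle$, it suffices to identify $C_a^{\lor} h_\lambda$ with the expression sitting in the right slot of the bracket on the right-hand side of \eqref{camh}.

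First I would write $h_\lambda = \prod_{i=1}^{s} h_i^{m_i}$ and apply the addition formula to the inner substitution in Proposition~\ref{p-calor}. For each factor,
\begin{equation*}
h_i\!\left[X+\tfrac{1}{z}\right]=\sum_{j=0}^{i} h_j[X]\, h_{i-j}\!\left[\tfrac{1}{z}\right]=\sum_{j=0}^{i}h_j[X]\,z^{-(i-j)}= z^{-i}H_i(z).
\end{equation*}
Taking the product over $i=1,\dots,s$ and using $|\lambda|=\sum_i i\,m_i$ yields
\begin{equation*}
h_\lambda\!\left[X+\tfrac{1}{z}\right]=\frac{1}{z^{|\lambda|}}\prod_{i=1}^{s} H_i(z)^{m_i}.
\end{equation*}

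Next I would simplify the $\Omega$ factor by splitting the bracket additively:
\begin{equation*}
\Omega\!\left[\tfrac{1-q}{q}zX\right]=\Omega\!\left[\tfrac{z}{q}X\right]\Omega[-zX]=\left(\sum_{n\geq 0}\frac{z^n}{q^n}h_n\right)\frac{1}{H(z)},
\end{equation*}
using $\Omega[zX]=H(z)$ and $\Omega[-Y]=\Omega[Y]^{-1}$. Substituting these two evaluations into the formula of Proposition~\ref{p-calor} gives
\begin{equation*}
C_a^{\lor} h_\lambda[X]=\left(-\frac{1}{q}\right)^{a-1}\prod_{i=1}^{s}H_i(z)^{m_i}\,\frac{1}{z^{|\lambda|}}\sum_{n\geq 0}\frac{z^n}{q^n}h_n\,\frac{1}{H(z)}\,\bigg|_{\frac{1}{z^a}},
\end{equation*}
which is exactly the element paired with $m_\mu$ on the right of \eqref{camh}. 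Pairing with $m_\mu$ via the Hall scalar product completes the identification.

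There is no serious obstacle here: the proof is a mechanical plethystic manipulation once Proposition~\ref{p-calor} is available. The only point requiring a little care is the bookkeeping of the power of $z$ when turning $h_i[X+1/z]$ into $z^{-i}H_i(z)$ and then accumulating the $z^{-|\lambda|}$ across all factors of $h_\lambda$; beyond that, the extraction $|_{1/z^a}$ is preserved throughout because all other manipulations are performed at the level of formal Laurent series in $z$ with symmetric-function coefficients.
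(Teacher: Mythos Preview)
Your proposal is correct and follows essentially the same route as the paper: apply adjointness to pass to $C_a^{\lor}h_\lambda$, expand each factor $h_i[X+1/z]=z^{-i}H_i(z)$ via the addition formula, collect the $z^{-|\lambda|}$, and split $\Omega\big[\tfrac{1-q}{q}zX\big]=\Omega\big[\tfrac{z}{q}X\big]\,\Omega[zX]^{-1}$. The only cosmetic difference is that the paper writes out the intermediate expression $\tfrac{1}{z^i}+h_1\tfrac{1}{z^{i-1}}+\cdots+h_i$ before factoring out $z^{-i}$, whereas you go straight to $z^{-i}H_i(z)$.
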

\begin{proof}
By (\ref{calor}), we calculate directly
\begin{align*}
\left<C_{a}m_{\mu},h_{\lambda}\right>
&=\left<m_{\mu},C_{a}^{\lor}h_{\lambda}\right>\\
&=\left<m_{\mu},\left(-\frac{1}{q}\right)^{a-1}h_{\lambda}\left[X+\frac{1}{z}\right]\Omega\left[\frac{1-q}{q}zX\right]\Bigg |_{\frac{1}{z^{a}}}\right>\\
&=\left<m_{\mu},\left(-\frac{1}{q}\right)^{a-1}\prod\limits_{i=1}^{s}\left(h_{i}\left[X+\frac{1}{z}\right]\right)^{m_{i}}\Omega\left[\frac{1-q}{q}zX\right]\Bigg |_{\frac{1}{z^{a}}}\right>\\
&=\left<m_{\mu},\left(-\frac{1}{q}\right)^{a-1}\prod\limits_{i=1}^{s}\left(\frac{1}{z^{i}}+h_{1}\frac{1}{z^{i-1}}+\cdots+h_{i}\right)^{m_{i}}\Omega\left[\frac{1-q}{q}zX\right]\Bigg |_{\frac{1}{z^{a}}}\right>\\
&=\left<m_{\mu},\left(-\frac{1}{q}\right)^{a-1}\prod\limits_{i=1}^{s}\left(1+h_{1}z+\cdots+h_{i}z^{i}\right)^{m_{i}}\frac{1}{z^{|\lambda|}}\Omega\left[\frac{1-q}{q}zX\right]\Bigg |_{\frac{1}{z^{a}}}\right>\\
&=\left<m_{\mu},\left(-\frac{1}{q}\right)^{a-1}\prod\limits_{i=1}^{s}H_{i}(z)^{m_{i}}\frac{1}{z^{|\lambda|}}\Omega\left[\frac{1-q}{q}zX\right]\Bigg |_{\frac{1}{z^{a}}}\right>,
\end{align*}
besides, we have
$$\Omega\left[\frac{1-q}{q}zX\right]=\Omega\left[\frac{z}{q}X-zX\right]=\Omega\left[\frac{z}{q}X\right]\Omega\left[zX\right]^{-1}=\sum\limits_{n\geq 0}\frac{z^{n}}{q^{n}}h_{n}\frac{1}{H(z)}.$$
This completes the proof.
\end{proof}

\subsection{The proof}
The case $(-1)^{k}m_{2^{k}1^{l}}$ is much more complicated.
A natural approach is to find an analogous result of Theorem \ref{t-m2k}, but such a formula seems too hard to guess according to our computer experiment.
Then we tried to extend Proposition \ref{p-m2k} and succeeded. For the reader's convenience, we restate Theorem \ref{t-2col} as follows.
\begin{thm}\label{main theorem}
For any positive integer $k$ and nonnegative integer $l$,
$$(-1)^{k}m_{2^{k}1^{l}}=\binom{k+l}{k}e_{2k+l}+q\sum\limits_{i=1}^{k}\sum\limits_{j=0}^{l}\binom{i-1+j}{i-1}C_{2i+j}((-1)^{k-i}m_{2^{k-i}1^{l-j}}).$$
\end{thm}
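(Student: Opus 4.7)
The strategy is to test the identity against each $h_\lambda$ for $\lambda\vdash 2k+l$ using the Hall scalar product, exploiting the adjoint machinery introduced in Section 4. Since $\langle m_\nu,h_\lambda\rangle=\delta_{\nu\lambda}$ and $e_{2k+l}=m_{1^{2k+l}}$, pairing the left-hand side with $h_\lambda$ reduces it to $(-1)^k\delta_{\lambda,2^k1^l}$, while the right-hand side becomes $\binom{k+l}{k}\delta_{\lambda,1^{2k+l}}+q\sum_{i,j}\binom{i-1+j}{i-1}(-1)^{k-i}\langle m_{2^{k-i}1^{l-j}},C_{2i+j}^{\lor}h_\lambda\rangle$. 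Writing $\lambda=(1^{a_1}2^{a_2}\cdots s^{a_s})$, formula \eqref{camh} then expresses each of these pairings as an explicit coefficient extraction in a formal Laurent series built from $H_s(z)=\sum_{j=0}^s h_jz^j$ and $H(z)$.

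Next, I would resum the $(i,j)$ double sum in closed form. The generating function identity $\sum_{j\geq 0}\binom{i-1+j}{i-1}w^j=(1-w)^{-i}$ should collapse the sum over $j$ to a rational factor in $z$, and the subsequent sum over $i$ should yield a compact expression to multiply against the $\prod_s H_s(z)^{a_s}$ factor produced by $\lambda$. Combined with the $\binom{k+l}{k}\delta_{\lambda,1^{2k+l}}$ term, the claim becomes a formal Laurent series statement that can be verified coefficient by coefficient in $z$. Since the $l=0$ case is exactly Proposition \ref{p-m2k}, the genuinely new content concerns how the $1$-parts of $\lambda$ interact with the binomial weights $\binom{i-1+j}{i-1}$.

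The main obstacle is the bookkeeping. In the $l=0$ case the plethystic simplification $m_{2^j}=e_j[p_2[X]]$ keeps everything on the even sublattice and Lemma \ref{l-grinberg2022petrie} closes the argument cleanly. With $1$-parts present, however, the coefficient of $h_{2^{k-i}1^{l-j}}$ in $C_{2i+j}^{\lor}h_\lambda$ depends sensitively on the full multiplicity profile of $\lambda$, and one must prove both the vanishing for every $\lambda\neq 2^k1^l$ outside the distinguished corner and the precise value $(-1)^k$ at $\lambda=2^k1^l$. I expect the Petrie-type identity of Lemma \ref{l-grinberg2022petrie} to reappear to reconcile the contribution at $\lambda=1^{2k+l}$ against the $\binom{k+l}{k}e_{2k+l}$ correction term, while the bulk cancellations come from the rational resummation of the binomial sums above.
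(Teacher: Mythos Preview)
Your plan is essentially the paper's own proof: pair with $h_\lambda$, push $C_{2i+j}$ across via $C_a^{\lor}$, use \eqref{camh}, and resum the double sum into a rational function in $z$. Two corrections will save you from the ``bookkeeping obstacle'' you anticipate.

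First, you are missing the one simplification that makes the general $\lambda$ tractable. Since the inner pairing is against $m_{2^{k-i}1^{l-j}}$, you only ever extract the coefficient of $h_1^{l-j}h_2^{k-i}h_3^0h_4^0\cdots$; hence you may set $h_r=0$ for all $r\ge 3$ from the outset. This collapses $\prod_s H_s(z)^{a_s}$ to $H_1(z)^{u}H_2(z)^{v}$ where $u=\#\{i:\lambda_i=1\}$ and $v=\#\{i:\lambda_i\ge 2\}$, and simultaneously truncates $H(z)^{-1}\sum_n (z/q)^n h_n$ to $H_2(z)^{-1}(1+\tfrac{z}{q}h_1+\tfrac{z^2}{q^2}h_2)$. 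After this, your resummation $\sum_j\binom{i-1+j}{j}w^j=(1-w)^{-i}$ followed by a geometric sum in $i$ gives exactly $\dfrac{-z^2h_2/q^2}{1+\tfrac{z}{q}h_1+\tfrac{z^2}{q^2}h_2}$, which cancels against the trailing factor and leaves the polynomial $(-1)^k H_1(z)^u H_2(z)^{v-1}$. (The paper reaches the same rational function by a slightly more elaborate route, encoding $\binom{i-1+j}{i-1}$ as $[(1+\alpha)^{i-1+j}/\alpha^{i-1}]\big|_{\alpha^0}$ and then doing a partial-fraction elimination of $\alpha$; your direct resummation is cleaner.)

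Second, Lemma~\ref{l-grinberg2022petrie} does not reappear. Once you have $(-1)^k H_1(z)^u H_2(z)^{v-1}$, extracting $[h_2^{k-1}h_1^{l}z^{2k+l-2}]$ is a pure binomial computation yielding $(-1)^k\binom{v-1}{k-1}\binom{v-k+u}{l}$. A short case analysis on $v$ (using $u+2v\le 2k+l$) then gives $(-1)^k$ at $\lambda=2^k1^l$, $-\binom{k+l}{k}$ at $\lambda=1^{2k+l}$ (here $v=0$ forces $\binom{-1}{k-1}=(-1)^{k-1}$), and $0$ otherwise---no Petrie identity needed.
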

\begin{proof}
It suffices to verify that
\begin{equation} \label{e-main-t}
\left<q\sum\limits_{i=1}^{k}\sum\limits_{j=0}^{l}\binom{i-1+j}{i-1}C_{2i+j}((-1)^{k-i}m_{2^{k-i}1^{l-j}}), h_{\lambda}\right>=
\begin{cases}
(-1)^{k}, & \text{if } \lambda=(2^{k}1^{l}),\\
-\binom{k+l}{k}, & \text{if } \lambda=(1^{2k+l}),\\
0, & \text{otherwise}.
\end{cases}
\end{equation}

For $\lambda\vdash 2k+l$, let
$$u=|\{i: \lambda_{i}=1\}|,\quad v=|\{i: \lambda_{i}\geq 2\}|.$$
Putting  $a=2i+j$ and $\mu=(2^{k-i}1^{l-j})$ in (\ref{camh}), we have
\begin{align*}
&\left<C_{2i+j}((-1)^{k-i}m_{2^{k-i}1^{l-j}}),h_{\lambda}\right>\\
&=\left<(-1)^{k-i}m_{2^{k-i}1^{l-j}}, \left(-\frac{1}{q}\right)^{2i+j-1}H_{1}(z)^{u}H_{2}(z)^{v}\frac{1}{z^{2k+l}}\sum\limits_{n\geq 0}\frac{z^{n}}{q^{n}}h_{n}\frac{1}{H(z)}\Bigg|_{\frac{1}{z^{2i+j}}}\right>\\
&=(-1)^{k-i}\left(-\frac{1}{q}\right)^{2i+j-1}H_{1}(z)^{u}H_{2}(z)^{v}\frac{1}{z^{2k+l}}
\sum\limits_{n\geq 0}\frac{z^{n}}{q^{n}}h_{n}\frac{1}{H(z)}
\Bigg|_{h_{2}^{k-i} h_{1}^{l-j} \frac{1}{z^{2i+j}} h_3^0h_4^0\cdots }\\
&=(-1)^{k-i}\left(-\frac{1}{q}\right)^{2i+j-1}H_{1}(z)^{u}H_{2}(z)^{v}\frac{1}{z^{2k+l}}\left(1+\frac{z}{q}h_{1}+\frac{z^{2}}{q^{2}}h_{2}\right)\frac{1}{H_{2}(z)}
\Bigg|_{h_{2}^{k-i} h_{1}^{l-j} \frac{1}{z^{2i+j}}}\\
&=(-1)^{k-i}\left(-\frac{1}{q}\right)^{2i+j-1}H_{1}(z)^{u}H_{2}(z)^{v-1}\frac{1}{z^{2k+l-2i-j}}
\frac{1}{h_{2}^{k-i}h_{1}^{l-j}}\left(1+\frac{z}{q}h_{1}+\frac{z^{2}}{q^{2}}h_{2}\right)\Bigg|_{h_{2}^{0} h_{1}^{0} z^{0}}.
\end{align*}
In the last step, we use the fact that $\{h_i\}_{ i\ge 1}$ are algebraically independent and generate $ \Lambda$, so that 
we are able to treat $h_i$ as variables and work in the ring of Laurent series in $h_1,h_2$.

Plugging this identity into the left hand side of \eqref{e-main-t}, we obtain
\begin{align*}
LHS
&=q\sum\limits_{i=1}^{k}\sum\limits_{j=0}^{l}\frac{(1+\alpha)^{i-1+j}}{\alpha^{i-1}}(-1)^{k-i}\left(-\frac{1}{q}\right)^{2i+j-1}H_{1}(z)^{u}H_{2}(z)^{v-1}\\
&\quad\quad\quad\quad\quad\quad\quad\quad\quad
\times\frac{1}{z^{2k+l-2i-j}}\frac{1}{h_{2}^{k-i}h_{1}^{l-j}}\left(1+\frac{z}{q}h_{1}+\frac{z^{2}}{q^{2}}h_{2}\right)\Bigg|_{h_{2}^{0}h_{1}^{0}z^{0}\alpha^{0}}\\
&=(-1)^{k}\sum\limits_{i=1}^{\infty}(-1)^{i-1}\left(\frac{z^{2} h_2( 1+\alpha)}{q^{2}\alpha}\right)^{i-1} \sum\limits_{j=0}^{\infty}(-1)^{j}(1+\alpha)^{j}\left(\frac{1}{q}\right)^{j}z^{j}(h_{1})^{j}\\
&\quad\quad\quad\quad\quad\quad\quad\quad\quad
\times H_{1}(z)^{u}H_{2}(z)^{v-1}\frac{z^{2}}{z^{2k+l}}\frac{h_{2}}{h_{2}^{k}h_{1}^{l}}\left(1+\frac{z}{q}h_{1}+\frac{z^{2}}{q^{2}}h_{2}\right)
\Bigg|_{h_{2}^{0}h_{1}^{0}z^{0}\alpha^{0}}\\
&=(-1)^{k}\frac{1}{1+\left(\frac{1}{q^{2}}\frac{1+\alpha}{\alpha}z^{2}h_{2}\right)}\frac{1}{1+\frac{1}{q}(1+\alpha)zh_{1}}(1+h_{1}z)^{u}(1+h_{1}z+h_{2}z^{2})^{v-1}\\
&\quad\quad\quad\quad\quad\quad\quad\quad\quad
\quad\quad\quad\quad\quad\quad
\times\frac{z^{2}}{z^{2k+l}}\frac{h_{2}}{h_{2}^{k}h_{1}^{l}}\left(1+\frac{z}{q}h_{1}+\frac{z^{2}}{q^{2}}h_{2}\right)\Bigg|_{h_{2}^{0}h_{1}^{0}z^{0}\alpha^{0}}
\end{align*}
We eliminate $\alpha$ by the following partial fraction decomposition.
\begin{multline*}
  \frac{1}{1+\left(\frac{1}{q^{2}}\frac{1+\alpha}{\alpha}z^{2}h_{2}\right)}\frac{1}{1+\frac{1}{q}(1+\alpha)zh_{1}}\\
  =
-{\frac{{z}^{2}{h_2}}{\alpha\left(q^{2}+qz h_1+z^{2}h_2\right)}}
\cdot \frac{1}{1+\left(\frac{1}{q^{2}}\frac{1+\alpha}{\alpha}z^{2}h_{2}\right)}
+\frac{q\left(q+zh_1\right) }{q^2+qzh_1+z^{2}h_2}\cdot\frac{1}{1+\frac{1}{q}(1+\alpha)zh_{1}}.
\end{multline*}
By taking the constant term in $\alpha$, the first term vanishes since it contains only negative powers in $\alpha$,
and for the second term, we simply set $\alpha=0$ to obtain
$$\frac{q\left(q+zh_1\right)}{q^2+qzh_1+z^{2}h_2}\cdot \frac{1}{1+\frac{1}{q}zh_{1}}=\frac{q^{2}}{q^{2}+qzh_{1}+z^{2}h_{2}}=\frac{1}{1+\frac{z}{q}h_1+\frac{z^2}{q^2}h_2},$$
since it contains only nonnegative powers in $\alpha$. Thus we have
\begin{align*}
LHS&=(-1)^{k}\frac{1}{1+\frac{z}{q}h_1+\frac{z^2}{q^2}h_2}(1+h_{1}z)^{u}(1+h_{1}z+h_{2}z^{2})^{v-1}\\
&\quad\quad\quad\quad\quad\quad\quad\quad\quad
\quad\quad\quad\quad\quad\quad
\times\frac{1}{z^{2k+l-2}}\frac{1}{h_{2}^{k-1}h_{1}^{l}}\left(1+\frac{z}{q}h_{1}+\frac{z^{2}}{q^{2}}h_{2}\right)\Bigg|_{h_{2}^{0}h_{1}^{0}z^{0}}\\
&=(-1)^{k}(1+h_{1}z)^{u}(1+h_{1}z+h_{2}z^{2})^{v-1}\frac{1}{z^{2k+l-2}}\frac{1}{h_{2}^{k-1}h_{1}^{l}}\Bigg|_{h_{2}^{0}h_{1}^{0}z^{0}}\\
&=(-1)^{k}\sum\limits_{u_{1}=0}^{u}\binom{u}{u_{1}}(h_{1}z)^{u_{1}}\sum\limits_{v_{1}+v_{2}+v_{3}=v-1}
\binom{v-1}{v_{1},v_{2},v_{3}}(h_{1}z)^{v_{2}}(h_{2}z^{2})^{v_{3}}\frac{1}{z^{2k+l-2}}\frac{1}{h_{2}^{k-1}h_{1}^{l}}\Bigg|_{h_{2}^{0}h_{1}^{0}z^{0}}\\
&=(-1)^{k}\sum\limits_{u_{1}=0}^{u}\binom{u}{u_{1}}(h_{1}z)^{u_{1}}\sum\limits_{v_{1}+v_{2}=v-k}\binom{v-1}{k-1}
\binom{v-k}{v_{2}}(h_{1}z)^{v_{2}}\frac{1}{z^{l}}\frac{1}{h_{1}^{l}}\Bigg|_{h_{1}^{0}z^{0}}\\
&=(-1)^{k}\sum\limits_{u_{1}=0}^{u}\binom{u}{u_{1}}\binom{v-1}{k-1}\binom{v-k}{l-u_{1}}\\
&=(-1)^{k}\binom{v-1}{k-1}\binom{v-k+u}{l}.
\end{align*}

When $v=0$, then $u=2k+l$ and
$$LHS=-(-1)^{k-1}\binom{-1}{k-1}\binom{k+l}{l}=-\binom{k-1}{k-1}\binom{k+l}{k}=-\binom{k+l}{k}.$$

When $0<v<k$, then $$\binom{v-1}{k-1}=0\Rightarrow LHS=0.$$

When $v\geq k$, by definition, we have $u+2v\leq 2k+l \iff v-k+u\leq l+k-v$, then $v-k+u\leq l$. If $v-k+u<l$, then $$\binom{v-k+u}{l}=0\Rightarrow LHS=0.$$
Otherwise $v-k+u=l$, $$u+2v\leq 2k+l\Rightarrow v\leq k,$$
then $v=k, u=l$, implying that $\lambda=2^k 1^l$ and
$$LHS=(-1)^{k}\binom{k-1}{k-1}=(-1)^{k}.$$
This completes the proof of Theorem \ref{main theorem}.
\end{proof}

\begin{cor}
For any positive integer $k$ and nonnegative integer $l$,
$$\left<(-1)^{k}\nabla m_{2^{k}1^{l}}, s_{\lambda}\right>\in \mathbb{N}[q,t] \quad \forall\ \lambda\vdash 2k+l.$$
That is, $(-1)^{k}\nabla m_{2^{k}1^{l}}$ is Schur positive.
\end{cor}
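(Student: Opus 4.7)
The strategy is to deduce the corollary from Theorem \ref{main theorem} by first establishing that $(-1)^{k} m_{2^{k} 1^{l}}$ admits a \emph{positive $C$-expansion}, that is, an $\mathbb{N}[q]$-linear combination of products of the form $C_{\alpha_{1}} C_{\alpha_{2}} \cdots C_{\alpha_{r}}(1)$ indexed by compositions $\alpha \vDash 2k+l$. Once such an expansion is produced, applying $\nabla$ term by term and invoking the Schur positivity of each $\nabla C_{\alpha}$ (recalled in the introduction from the Grojnowski--Haiman theorem on LLT polynomials, together with the Compositional Shuffle Theorem) will immediately deliver the conclusion.

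To produce the positive $C$-expansion I would induct on $k$, with the inductive hypothesis asserting that $(-1)^{k'} m_{2^{k'} 1^{l'}}$ admits a positive $C$-expansion for every $k' < k$ and every $l' \geq 0$. The base case $k' = 0$ reduces to $m_{1^{l'}} = e_{l'} = \sum_{\alpha \vDash l'} C_{\alpha}(1)$, which is the identity $e_{n} = \sum_{\alpha \vDash n} C_{\alpha}$ recorded in the introduction. For the inductive step, apply the recursion in Theorem \ref{main theorem}: the first summand $\binom{k+l}{k} e_{2k+l}$ already has a positive $C$-expansion via the same identity, and the remaining double sum is an $\mathbb{N}[q]$-linear combination of expressions of the form $C_{2i+j}\bigl((-1)^{k-i} m_{2^{k-i} 1^{l-j}}\bigr)$; applying the inductive hypothesis to each inner factor and then prepending the operator $C_{2i+j}$ gives a positive $C$-expansion of every such term.

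Concatenating the resulting expressions yields
\[
(-1)^{k} m_{2^{k} 1^{l}} = \sum_{\alpha \vDash 2k+l} c_{\alpha}(q)\, C_{\alpha_{1}} C_{\alpha_{2}} \cdots C_{\alpha_{r}}(1), \qquad c_{\alpha}(q) \in \mathbb{N}[q],
\]
and applying $\nabla$ gives $(-1)^{k} \nabla m_{2^{k} 1^{l}} = \sum_{\alpha} c_{\alpha}(q)\, \nabla C_{\alpha}$. Since each $\nabla C_{\alpha}$ has Schur coefficients in $\mathbb{N}[q,t]$, the corollary follows.

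The main obstacle has already been overcome in proving Theorem \ref{main theorem}; the only things that need to be verified in this final step are that the iteration terminates and that it produces genuinely nonnegative coefficients. Termination is transparent because the index $k$ strictly decreases at each application of the recursion (the inner sum starts at $i=1$), and nonnegativity is immediate because the coefficients $\binom{k+l}{k}$ and $q\binom{i-1+j}{i-1}$ appearing in Theorem \ref{main theorem} all lie in $\mathbb{N}[q]$, so that positivity is preserved under every iteration.
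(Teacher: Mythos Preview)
Your proposal is correct and is precisely the argument the paper gives (in compressed form): induct on $k$ using Theorem \ref{main theorem} to show $(-1)^{k} m_{2^{k}1^{l}}$ is $C$-positive, with base case $m_{1^{l}}=e_{l}=\sum_{\alpha\vDash l}C_\alpha$, and then conclude Schur positivity of $(-1)^{k}\nabla m_{2^{k}1^{l}}$ from that of each $\nabla C_\alpha$. Your write-up simply fills in the details the paper leaves implicit.
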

\begin{proof}
By induction on $k$, $(-1)^{k} m_{2^{k}1^{l}}$ is $C$-positive.
\end{proof}

\noindent
\textbf{Data availability:} All data generated or analyzed during this study are included in this published article.

\noindent
\textbf{Acknowledgements:} The authors would like to thank Emily Sergel for helpful communications.
The authors would also like to thank the referees for valuable suggestions to improve the representation. This work was partially supported by the National Natural Science Foundation of China [12071311].

\bibliographystyle{plain}

\end{document}